\documentclass[12pt,reqno,letterpaper]{amsart}
\usepackage[T1]{fontenc}
\usepackage[utf8]{inputenc}
\usepackage{amsmath}
\usepackage{amssymb}
\usepackage{amsthm} 
\usepackage{amstext} 
\usepackage[margin=1in, footskip=0.3in]{geometry}
\usepackage{hyperref}
\usepackage{cite}
\allowdisplaybreaks[4]
\hypersetup{
colorlinks=true,
linkcolor=blue,
urlcolor=red,
citecolor=green,
}

\newtheorem{theorem}{Theorem}[section]
\newtheorem{lemma}[theorem]{Lemma}
\newtheorem{conjecture}{Conjecture}
\numberwithin{equation}{section}

\begin{document}

\title{Endpoint absolute monotonicity for the complete elliptic integral of the first kind}

\author[Deng]{Qintao Deng}
\address{[Qintao Deng] School of Mathematics and Statistics \& Hubei Key Laboratory of Mathematical Sciences, Central China Normal University, Wuhan 430079, P.R. China}
\email{qintaodeng@ccnu.edu.cn}

\author[Kou]{Yunjia Kou$^{\dagger}$}
\address{[Yunjia Kou] Department of Mathematics and Statistics, Washington State University, Pullman, WA 99164-3113, USA}
\email{yunjia.kou@wsu.edu}

\author[Zhong]{Fulin Zhong}
\address{[Fulin Zhong] School of Mathematics and Statistics, Central China Normal University, Wuhan 430079, P. R. China}
\email{flzhong@mails.ccnu.edu.cn}

\thanks{$^{\dagger}$ Corresponding author: Yunjia Kou}


\subjclass[2020]{33E05, 
26A48, 
26D15 
}
\keywords{complete elliptic integral of the first kind, absolute monotonicity, Maclaurin coefficients}

\begin{abstract} 
Let
\begin{equation*}
f_p(x)=\frac{\mathcal{K}(\sqrt{x})}{p-\ln\sqrt{1-x}},
\end{equation*}
where $\mathcal{K}$ denotes the complete elliptic integral of the first kind, and set $g_p=1/f_p$. Tian and Yang conjectured that, at $p=\ln4$, both $-f_{\ln4}^{\prime\prime\prime}$ and $g_{\ln4}$ are absolutely monotonic on $(0,1)$. Writing
\begin{equation*}
\frac{2}{\pi}f_{\ln4}(x)=\sum_{n=0}^{\infty}a_n(\ln4)x^n,\quad \frac{\pi}{2}g_{\ln4}(x)=\sum_{n=0}^{\infty}b_n(\ln4)x^n,
\end{equation*}
we prove
\begin{equation*}
a_n(\ln4)<0\quad(n\geq3),\quad b_n(\ln4)>0\quad(n\geq0),
\end{equation*}
thereby settling both conjectures. The proof of the first inequality is based on the representation
\begin{equation*}
a_n(\ln4)=\frac{(-1)^nC}{15^{n+1}}-M_n,
\end{equation*}
where $C>0$ and $(M_n)$ is a positive moment sequence. Its strict log-convexity, together with estimates for the initial coefficients, determines the sign of every $a_n$. The second inequality follows from a factorization of the quadratic truncation of the normalized Taylor series and a reciprocal-series argument. 
\end{abstract}

\maketitle

\section{Introduction}

\subsection{Background and the conjectures}

For $0<r<1$, Legendre's complete elliptic integral of the first kind is defined by
\begin{equation*}
\mathcal{K}(r)=\int_0^{\frac{\pi}{2}}\frac{\mathrm{d}t}{\sqrt{1-r^2\sin^2t}}
=\frac{\pi}{2}{}_2F_1\left(\frac{1}{2},\frac{1}{2};1;r^2\right).
\end{equation*}
As $r\to1^-$, $\mathcal{K}(r)$ has the logarithmic behavior
\begin{equation*}
\mathcal{K}(r)=\ln\frac{4}{r'}+O\left((r')^2\ln\frac{1}{r'}\right), \quad r'=\sqrt{1-r^2}.
\end{equation*}
This estimate follows, for example, from \cite{PonnusamyVuorinen1997}. It naturally leads to the comparison of $\mathcal{K}(r)$ with logarithmic functions and to the study of how monotonicity and convexity depend on the constant appearing in the logarithmic term.

Complete elliptic integrals have been extensively studied from the viewpoints of sharp bounds, mean inequalities, monotonicity and concavity \cite{AVV1990,AVV1992,QiuVamanamurthy1996,QiuVamanamurthyVuorinen1998,AlzerQiu2004,ChuQiuWang2012,YangQianChu2018,AlzerRichards2020,RichardsSmith2021}. Logarithmic-type estimates closely related to the asymptotic behavior above were obtained in \cite{Wang2025}. 
Through the hypergeometric representation of $\mathcal{K}$, these questions are also connected with zero-balanced and generalized hypergeometric functions. Bounds and ratio inequalities for such functions were studied in \cite{AndersonBarnard1995,QiuVuorinen2000,KarpSitnik2009,Richards2019,BarnardRichardsSliheet2020}. Further results include Tur\'an-type inequalities for generalized complete elliptic integrals \cite{Baricz2007} and parameter log-convexity and log-concavity for hypergeometric-type functions \cite{KarpSitnik2010}.

Absolute monotonicity for functions associated with complete elliptic integrals and zero-balanced hypergeometric functions has been investigated in \cite{YangTian2021,SunWangHuang2025,WangYang2026,YangZhao2026}. Recurrence relations for Maclaurin coefficients were developed in \cite{Yang2025}, while further criteria for absolute monotonicity in Gaussian hypergeometric families were obtained in \cite{Zhao2026}.

The present paper concerns two endpoint conjectures for a quotient involving the complete elliptic integral and its reciprocal. For $p>0$, let
\begin{equation}\label{fpdef}
f_p(x)=\frac{\mathcal{K}(\sqrt{x})}{p-\ln\sqrt{1-x}},\quad 0<x<1,
\end{equation}
and for $q\geq0$, let
\begin{equation}\label{gqdef}
g_q(x)=\frac{q-\ln\sqrt{1-x}}{\mathcal{K}(\sqrt{x})},\quad 0<x<1.
\end{equation}
Thus $g_q=1/f_q$ for $q>0$. Since
\begin{equation*}
p-\ln\sqrt{1-x}=\ln\frac{\mathrm{e}^p}{\sqrt{1-x}},
\end{equation*}
the choice $p=\ln4$ matches the constant in the logarithmic asymptotic expansion of $\mathcal{K}(\sqrt{x})$ as $x\to1^-$. In particular, $f_p(x)\to1$ as $x\to1^-$ for every $p>0$. 
Yang and Tian \cite{YangTian2019Convexity} proved that $f_p$ is strictly concave on $(0,1)$ if and only if $p=\frac{4}{3}$.  

Recall that a function $h$ is absolutely monotonic on an interval $I$ if it has derivatives of all orders and
\begin{equation*}
h^{(m)}(x)\geq0,\quad x\in I,\  m\geq0.
\end{equation*}
Tian and Yang \cite{TianYang2022} proved that $-f_p''$ is absolutely monotonic on $(0,1)$ if and only if $p=\frac{4}{3}$, and that $f_p'''$ is absolutely monotonic if and only if $p\geq p_0$, where $p_0=1.506583\ldots$ is the unique real zero of
\begin{equation*}
75p^3-230p^2+240p-96.
\end{equation*}
They also proved that $f_p'$ and $f_p$ are absolutely monotonic if and only if $p\geq2$. 
For the reciprocal family, Tian and Yang \cite{TianYang2022} obtained the necessary and sufficient conditions for the absolute monotonicity of $-g_q'''$, $-g_q''$ and $-g_q'$ on $(0,1)$, namely,
\begin{equation*}
q\geq\frac{50}{33},\quad q\geq\frac{8}{5},\quad q\geq2,
\end{equation*}
respectively. They also proved that $g_q$ is absolutely monotonic on $(0,1)$ for $0\leq q\leq1$.

The logarithmic parameter $\ln4$ is not covered by these results. Indeed,
\begin{equation*}
\frac{4}{3}<\ln4<p_0,\quad 1<\ln4<\frac{50}{33}.
\end{equation*}
Thus the preceding criteria do not determine whether $-f_{\ln4}'''$ and $g_{\ln4}$ are absolutely monotonic. Tian and Yang formulated the following two endpoint conjectures in \cite{TianYang2022}.
 
\begin{conjecture}
Let $f_p$ be defined by \eqref{fpdef}. Then $-f_{\ln4}'''$ is absolutely monotonic on $(0,1)$.
\end{conjecture}

\begin{conjecture}
Let $g_q$ be defined by \eqref{gqdef}. Then $g_{\ln4}$ is absolutely monotonic on $(0,1)$.
\end{conjecture}

The purpose of the present paper is to prove these two conjectures.

\subsection{Main results}

Let
\begin{equation*}
W_n=\frac{\Gamma(n+\frac{1}{2})}{\Gamma(\frac{1}{2})\Gamma(n+1)}.
\end{equation*}
Following the normalization in \cite{TianYang2022}, write
\begin{equation}\label{originalseries}
\mathcal{K}(\sqrt{x})=\frac{\pi}{2}\sum_{n=0}^{\infty}W_n^2x^n,\quad f_p(x)=\frac{\pi}{2}\sum_{n=0}^{\infty}a_n(p)x^n,\quad g_q(x)=\frac{2}{\pi}\sum_{n=0}^{\infty}b_n(q)x^n.
\end{equation}
The first series converges for $|x|<1$. The latter two are initially understood as Taylor expansions in a neighborhood of the origin. At $p=q=\ln4$, their analyticity in the unit disk will be established in Sections \ref{sec-first} and \ref{sec-second}. 
 
\begin{theorem}\label{main}
The coefficients in \eqref{originalseries} satisfy
\begin{equation*}
a_n(\ln4)<0\ \ (n\geq3),\quad b_n(\ln4)>0\ \ (n\geq0).
\end{equation*}
Consequently,
\begin{equation*}
\left(-f_{\ln4}'''\right)^{(m)}(x)>0,\quad g_{\ln4}^{(m)}(x)>0
\end{equation*}
for every integer $m\geq0$ and every $x\in(0,1)$. In particular, Conjectures 1 and 2 hold.
\end{theorem}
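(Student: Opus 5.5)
The plan has two halves, following the abstract. For the first inequality I would find an integral (Stieltjes-type) representation of $f_{\ln4}$; for the second, a reciprocal-series argument built on a factorization of a quadratic truncation.

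\textbf{The $f$ half.} Put $t=\sqrt{1-x}$ and $p=\ln4$, so the denominator is $\ln(4/t)$. I would write $\mathcal{K}(\sqrt{x})$ through its classical representation near $x=1$, as a logarithmic term plus analytic pieces. I would then study the analytic continuation of $f_{\ln4}$ to $\mathbb{C}\setminus[1,\infty)$.

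The denominator $\ln4-\frac12\ln(1-x)$ vanishes exactly when $1-x=16$, i.e.\ at $x=-15$. This gives a simple pole with residue $C/15^{n+1}$ contributing $(-1)^nC/15^{n+1}$, and this is the source of the factor $15^{n+1}$. The cut $[1,\infty)$ should contribute, via the jump of $f$ across the cut, a term $-M_n=-\int_1^\infty s^{-n-1}\,d\mu(s)$ with $\mu\ge0$.

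To obtain this I would apply Cauchy's formula on a keyhole contour around $[1,\infty)$ and a small circle at $-15$. The growth $f_p\to1$ at infinity (up to logarithmic factors) must be checked to justify dropping the large circle, perhaps after subtracting the first terms. Positivity of the density amounts to showing that $\operatorname{Im} f(s+i0)$ has a fixed sign for $s>1$. Using $\mathcal{K}(\sqrt{s\pm i0})$ expressed via $\mathcal{K}(1/\sqrt s)$ and $\mathcal{K}'$, this reduces to an explicit inequality between elliptic integrals and $\ln$, which I would prove directly.

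Once $a_n=(-1)^nC/15^{n+1}-M_n$ is established:
\begin{itemize}
\item For odd $n$ both terms are negative, so $a_n<0$ immediately.
\item For even $n$ I need $M_n>C/15^{n+1}$. Since $M_n$ is a moment sequence on $[1,\infty)$, Cauchy--Schwarz makes it strictly log-convex, so $M_{n+1}/M_n$ is increasing.
\item Hence $M_n\ge M_N (M_{N+1}/M_N)^{n-N}$. Comparing with the ratio $1/15$ of the pole term, it suffices to verify $M_N>C/15^{N+1}$ together with $M_{N+1}/M_N>1/15$ at a small index $N$, e.g.\ $N=3$ or $4$.
\item These initial values come from computing $a_3,a_4,a_5$ numerically with rigorous bounds, together with an explicit $C$ from the residue: $C$ involves $\mathcal{K}(\sqrt{-15})=\mathcal{K}(\sqrt{15/16})/4$-type values.
\end{itemize}
Since $a_n<0$ for all $n\ge3$, every derivative of $-f'''$ is a power series with positive coefficients, giving the conclusion on $(0,1)$.

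\textbf{The $g$ half.} I would write $\frac{\pi}{2}g=\bigl(\ln4+\tfrac12\sum_{k\ge1}x^k/k\bigr)\big/\sum W_n^2x^n$. The key device is to factor the reciprocal series. Let $Q(x)$ be the quadratic truncation of the normalized series of $\mathcal{K}$, and write $1/\sum W_n^2x^n=(1/Q)\cdot(Q/\sum W_n^2x^n)$. The aim is that $Q$ factors as $(1-\alpha x)(1-\beta x)$, or that the quotient has a structure of the type treated by Kaluza's lemma.

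Indeed, $W_n^2$ is log-convex ($W_{n+1}^2/W_n^2=((2n+1)/(2n+2))^2$ increases), so by Kaluza $1/\sum W_n^2x^n=1-\sum c_nx^n$ with $c_n\ge0$ and $\sum c_n=1$ (since $\mathcal{K}\to\infty$). Then
\begin{equation*}
b_n=\frac1{2n}-\ln4\,c_n-\frac12\sum_{k=1}^{n-1}\frac{c_{n-k}}{k}.
\end{equation*}
Positivity needs a sharp upper bound on $c_n$. The quadratic factorization would supply explicit comparison bounds for $c_n$, with the tail handled by asymptotics $c_n\sim \mathrm{const}/(n\ln^2n)$.

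\textbf{Main obstacle.} The hardest step is the global sign of the jump density on the cut, together with rigorous control of the analytic continuation; the $b_n$ estimate for small $n$ is the delicate numeric part.
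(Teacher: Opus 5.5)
Your treatment of the $a_n$ follows the paper's route: continuing $\varphi=A/D$ to $\mathbb{C}\setminus[1,\infty)$, the simple pole at $-15$ with residue $C=32A(-15)\in(0,32)$, the keyhole contour giving $a_n=(-1)^nC/15^{n+1}-M_n$, and Cauchy--Schwarz log-convexity of $M_n$. Three small points. First, the inequality you leave to be proved is exactly $\frac{\pi}{2}\mathcal{K}(r)/\mathcal{K}(r')>\ln(4r/r')$ with $r=t^{-1/2}$; the paper takes it from the classical lower bound $\ln\bigl((1+\sqrt r)^2/r'\bigr)$. Second, $\varphi$ does not tend to $1$ at infinity; it is $O(|z|^{-1/2})$ on large circles, so no subtraction is needed. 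Third, you need no numerics for $a_4,a_5$ and no precise value of $C$. The exact sign $a_2=(3p-4)^2/(64p^3)>0$ gives $M_2<C/15^3$, and $-a_3>1/768>64/15^4>2C/15^4$ gives $M_3>C/15^4$, so $M_3/M_2>1/15$ already.

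The genuine gap is in the $b_n$ half. Your factorization device is applied to the wrong series. The quadratic truncation $1+\frac{x}{4}+\frac{9}{64}x^2$ of $\sum W_n^2x^n$ has discriminant $\frac{1}{16}-\frac{9}{16}<0$ and a positive linear coefficient. So it neither splits as $(1-\alpha x)(1-\beta x)$ with $\alpha,\beta>0$ nor has a positive reciprocal. The Kaluza route is valid as far as it goes, but it does not close. In $b_n=\frac{1}{2n}-c_n\ln4-\frac12\sum_{k=1}^{n-1}c_{n-k}/k$ every term is of order $1/n$, and the $\frac{1}{2n}$ is cancelled at leading order by the convolution, because $\sum c_j=1$. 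Meanwhile $b_n$ itself is tiny: already $b_3\approx 0.0055$ against $\frac16$, and asymptotically $b_n$ is of order $1/(n^2\ln^2 n)$. An upper bound on $c_n$ cannot certify such a cancellation. You would need uniform two-sided control of the whole convolution at that precision, and an asymptotic $c_n\sim\pi/(n\ln^2 n)$ without explicit error terms certifies no particular $n$.

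The missing idea is that your first half already does the work. Factor the quadratic truncation of $\varphi=\frac{2}{\pi}f_{\ln4}$ itself:
\[ P(x)=a_0+a_1x+a_2x^2=\tfrac1p(1-\alpha x+\beta x^2), \]
with $\alpha=\frac{2-p}{4p}>0$ and $\beta=\frac{(3p-4)^2}{64p^2}>0$. Its discriminant is $\alpha^2-4\beta=\frac{(p-1)(3-2p)}{4p^2}>0$, so $P=\frac1p(1-\lambda x)(1-\mu x)$ with $\lambda,\mu>0$, and $1/P$ has positive coefficients. Since $a_n<0$ for $n\ge 3$, you can write $\varphi=P-H$ with $H=\sum_{n\ge3}(-a_n)x^n$ having positive coefficients. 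Then $1/\varphi=P^{-1}\sum_{m\ge0}(H/P)^m$ has positive coefficients. To conclude on $(0,1)$ you also need $1/\varphi=D/A$ to be analytic in the unit disk. This follows from $\operatorname{Re}A(z)>0$ for $|z|<1$, via the Euler integral.
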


The main difficulty lies in determining the signs of all higher coefficients at $p=\ln4$. The recurrence for $a_n(p)$ involves coefficients of both signs, and a negative coefficient contributes positively to the next recurrence step. Thus the recurrence is not sign preserving, and a direct induction does not close.

We overcome this difficulty by meromorphic continuation and contour integration. The logarithmic denominator extends to $\mathbb{C}\setminus[1,\infty)$ and has a unique zero at $-15$, producing a simple pole of the normalized generating function. The residue theorem then gives
\begin{equation} \label{introformula}
a_n(\ln4)=\frac{(-1)^nC}{15^{n+1}}-M_n,\quad
M_n=\int_1^\infty\frac{\rho(t)}{t^{n+1}}\mathrm{d}t,
\end{equation}
where $C>0$ and $\rho(t)>0$ for $t>1$. The positivity of $\rho$ follows from
\begin{equation*}
\frac{\pi}{2}\frac{\mathcal{K}(r)}{\mathcal{K}(r')}>\ln\frac{4r}{r'},\quad 0<r<1.
\end{equation*}
The moment sequence $(M_n)$ is strictly log-convex, and hence the ratios $M_{n+1}/M_n$ are strictly increasing. The estimates for $a_2(\ln4)$ and $a_3(\ln4)$ give
\begin{equation*}
\frac{M_3}{M_2}>\frac{1}{15}.
\end{equation*}
Since the ratios $M_{n+1}/M_n$ are strictly increasing, it follows that
\begin{equation*}
M_n>\frac{C}{15^{n+1}},\quad n\geq3.
\end{equation*}
Together with \eqref{introformula}, this yields
\begin{equation*}
a_n(\ln4)<0,\quad n\geq3.
\end{equation*}

For the second inequality, write
\begin{equation*}
\frac{2}{\pi}f_{\ln4}(x)=P(x)-H(x),
\end{equation*}
where
\begin{equation*}
P(x)=a_0+a_1x+a_2x^2,\quad
H(x)=\sum_{n=3}^{\infty}(-a_n)x^n.
\end{equation*}
The coefficients of $H$ are strictly positive, while
\begin{equation*}
P(x)=\frac{1}{\ln4}(1-\lambda x)(1-\mu x),\quad \lambda,\mu>0.
\end{equation*}
Hence $1/P$ has strictly positive Taylor coefficients. Since
\begin{equation*}
\frac{1}{P-H}=\frac{1}{P}\frac{1}{1-H/P},
\end{equation*}
the geometric-series expansion of the second factor gives
\begin{equation*}
b_n(\ln4)>0,\quad n\geq0.
\end{equation*}
Finally, $A$ has no zeros in the unit disk, so $1/\varphi=D/A$ is analytic there and its Taylor series converges throughout $|z|<1$.

The paper is organized as follows. Section \ref{sec-first} proves the first coefficient assertion in Theorem \ref{main} and hence Conjecture 1. Section \ref{sec-second} proves the reciprocal coefficient assertion and hence Conjecture 2.

\section{Proof of Conjecture 1} \label{sec-first}
In this section, we first derive the algebraic recurrence to explicitly compute the initial coefficients. To govern the signs of all higher-order coefficients at $p=\ln 4$, we analytically continue the generating function to the complex plane and use contour integration to extract the moment representation \eqref{introformula}. The strict positivity of this moment sequence will then be established via real-variable elliptic inequalities and Cauchy-Schwarz propagation.

\subsection{Initial coefficients}

For $|x|<1$,
\begin{equation} \label{denseries} 
p-\ln\sqrt{1-x}=p+\sum_{n=1}^{\infty}\frac{x^n}{2n}.
\end{equation}

\begin{lemma}
For $p>0$, the coefficients $a_n(p)$ in \eqref{originalseries} satisfy $a_0(p)=\frac{1}{p}$ and
\begin{equation} \label{arecurrenceoriginal} 
a_n(p)=\frac{1}{p}W_n^2-\frac{1}{2p}\sum_{k=0}^{n-1}\frac{a_k(p)}{n-k}, \quad n\geq 1.
\end{equation}
In particular,
\begin{equation} \label{coeffs} 
a_0(p)=\frac{1}{p}, \quad a_1(p)=\frac{p-2}{4p^2}, \quad a_2(p)=\frac{(3p-4)^2}{64p^3},
\end{equation}
and
\begin{equation} \label{a3formula} 
a_3(p)=\frac{75p^3-230p^2+240p-96}{768p^4}.
\end{equation}
\end{lemma}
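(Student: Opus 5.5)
The plan is to multiply the two series and compare coefficients. By \eqref{originalseries}, $\mathcal{K}(\sqrt{x})=\frac{\pi}{2}\sum_{n\ge0}W_n^2x^n$. Since $p-\ln\sqrt{1-x}\to p>0$ as $x\to0$, $f_p$ is analytic near the origin and its Taylor series $\frac{\pi}{2}\sum a_n(p)x^n$ exists there. Clearing the denominator in \eqref{fpdef} and using \eqref{denseries} gives, near $x=0$,
\begin{equation*}
\sum_{n=0}^{\infty}W_n^2x^n=\Bigl(\sum_{n=0}^{\infty}a_n(p)x^n\Bigr)\Bigl(p+\sum_{m=1}^{\infty}\frac{x^m}{2m}\Bigr).
\end{equation*}
This is a Cauchy product of two power series converging in a neighbourhood of $0$.

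Next I compare coefficients of $x^n$. For $n=0$ this gives $1=W_0^2=p\,a_0(p)$, so $a_0(p)=1/p$. For $n\ge1$ it gives
\begin{equation*}
W_n^2=p\,a_n(p)+\sum_{k=0}^{n-1}\frac{a_k(p)}{2(n-k)}.
\end{equation*}
Solving for $a_n(p)$, which is legitimate because $p>0$, yields \eqref{arecurrenceoriginal}.

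For the explicit values I need $W_1=\tfrac12$, $W_2=\tfrac38$ and $W_3=\tfrac5{16}$, so $W_1^2=\tfrac14$, $W_2^2=\tfrac9{64}$ and $W_3^2=\tfrac{25}{256}$. I then iterate the recurrence.
\begin{itemize}
\item For $n=1$: $a_1=\frac{1}{4p}-\frac{1}{2p^2}=\frac{p-2}{4p^2}$.
\item For $n=2$: $a_2=\frac{9}{64p}-\frac{1}{2p}\bigl(a_1+\tfrac12a_0\bigr)$. Here $a_1+\tfrac12 a_0=\frac{3p-2}{4p^2}$, so $a_2=\frac{9p^2-24p+16}{64p^3}=\frac{(3p-4)^2}{64p^3}$.
\item For $n=3$: $a_3=\frac{25}{256p}-\frac{1}{2p}\bigl(a_2+\tfrac12a_1+\tfrac13a_0\bigr)$. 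I put everything over the common denominator $768p^4$ and expand. The numerator should collect to $75p^3-230p^2+240p-96$, which gives \eqref{a3formula}.
\end{itemize}

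There is no conceptual obstacle here. The only step needing care is the $a_3$ arithmetic: tracking the rational coefficients over the common denominator $768p^4$. A useful check is that the cubic must agree with the one quoted from \cite{TianYang2022} in the introduction.
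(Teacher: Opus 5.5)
Your proposal is correct and follows the paper's proof essentially step for step: the Cauchy product with the logarithmic series, coefficient comparison to get $a_0=1/p$ and the recurrence, then iterating it using $W_1^2=\tfrac14$, $W_2^2=\tfrac{9}{64}$ and $W_3^2=\tfrac{25}{256}$. The $a_3$ arithmetic you left as ``should collect'' does check out. One has $\tfrac13a_0+\tfrac12a_1+a_2=\frac{115p^2-120p+48}{192p^3}$, and hence $a_3=\frac{75p^3-2(115p^2-120p+48)}{768p^4}$, which is exactly the stated cubic over $768p^4$.
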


\begin{proof}
For $|x|$ sufficiently small, \eqref{originalseries} and \eqref{denseries} give
\begin{equation*}
\left(p+\sum_{j=1}^{\infty}\frac{x^j}{2j}\right) \left(\sum_{k=0}^{\infty}a_k(p)x^k\right) =\sum_{n=0}^{\infty}W_n^2x^n.
\end{equation*}
On a sufficiently small disk, all three Taylor series converge absolutely, so the product on the left is given by the Cauchy product. The constant coefficient is $pa_0$, while the coefficient of $x^n$, for $n\geq 1$, is
\begin{equation*}
pa_n+\sum_{j=1}^{n}\frac{a_{n-j}}{2j}.
\end{equation*}
Changing the index by $k=n-j$ gives
\begin{equation*}
\sum_{j=1}^{n}\frac{a_{n-j}}{2j} =\frac{1}{2}\sum_{k=0}^{n-1}\frac{a_k}{n-k}.
\end{equation*}
Comparison with the right-hand side therefore yields
\begin{equation*}
pa_0=1
\end{equation*}
and, for every $n\geq 1$,
\begin{equation*}
pa_n+\frac{1}{2}\sum_{k=0}^{n-1}\frac{a_k}{n-k}=W_n^2.
\end{equation*}
Since $p>0$, division by $p$ gives $a_0=1/p$ and \eqref{arecurrenceoriginal}. 

Since
\begin{equation*}
W_0=1,\quad W_n=\frac{2n-1}{2n}W_{n-1}\quad(n\geq1),
\end{equation*}
we obtain
\begin{equation*}
W_1=\frac{1}{2},\quad W_2=\frac{3}{8},\quad W_3=\frac{5}{16}.
\end{equation*}
For $n=1$, formula \eqref{arecurrenceoriginal} gives
\begin{equation*}
a_1=\frac{1}{4p}-\frac{a_0}{2p} =\frac{1}{4p}-\frac{1}{2p^2} =\frac{p-2}{4p^2}.
\end{equation*}
For $n=2$,
\begin{equation*}
a_2=\frac9{64p}-\frac{1}{2p}\left(\frac{a_0}{2}+a_1\right).
\end{equation*}
Using the formulas for $a_0$ and $a_1$,
\begin{equation*}
\frac{a_0}{2}+a_1 =\frac{1}{2p}+\frac{p-2}{4p^2} =\frac{3p-2}{4p^2}.
\end{equation*}
Then
\begin{equation*}
a_2=\frac9{64p}-\frac{3p-2}{8p^3} =\frac{9p^2-24p+16}{64p^3} =\frac{(3p-4)^2}{64p^3}.
\end{equation*}
For $n=3$,
\begin{equation*}
a_3=\frac{25}{256p}-\frac{1}{2p}\left(\frac{a_0}{3}+\frac{a_1}{2}+a_2\right).
\end{equation*}
Using the formulas for $a_0$, $a_1$ and $a_2$,
\begin{equation*}
\frac{a_0}{3}+\frac{a_1}{2}+a_2=\frac{1}{3p}+\frac{p-2}{8p^2}+\frac{(3p-4)^2}{64p^3}=\frac{115p^2-120p+48}{192p^3}.
\end{equation*}
Therefore,
\begin{equation*}
a_3=\frac{25}{256p}-\frac{115p^2-120p+48}{384p^4}=\frac{75p^3-230p^2+240p-96}{768p^4}.
\end{equation*}
This proves the stated formulas.
\end{proof}

\subsection{Meromorphic continuation and boundary values}

From now on fix $p=\ln4$ and write $a_n=a_n(\ln4)$. For $0<x<1$, the definition of $\mathcal{K}$ gives
\begin{equation*}
\frac{2}{\pi}\mathcal{K}(\sqrt{x})=\frac{2}{\pi}\int_0^{\frac{\pi}{2}}\frac{\mathrm{d}\theta}{\sqrt{1-x\sin^2\theta}}.
\end{equation*}
Setting $t=\sin^2\theta$, we have
\begin{equation*}
\mathrm{d}\theta=\frac{\mathrm{d}t}{2\sqrt{t(1-t)}},
\end{equation*}
and hence
\begin{equation*}
\frac{2}{\pi}\mathcal{K}(\sqrt{x})=\frac{1}{\pi}\int_0^1\frac{(1-xt)^{-\frac{1}{2}}}{\sqrt{t(1-t)}}\mathrm{d}t.
\end{equation*}
Set $\Omega=\mathbb{C}\setminus[1,\infty)$ and define
\begin{equation} \label{EulerBeta}
A(z)=\frac{1}{\pi}\int_0^1\frac{(1-zt)^{-\frac{1}{2}}}{\sqrt{t(1-t)}}\mathrm{d}t,\quad z\in\Omega,
\end{equation}
where the principal branch of the inverse square root is used. Thus
\begin{equation*}
A(x)=\frac{2}{\pi}\mathcal{K}(\sqrt{x}),\quad 0<x<1.
\end{equation*}

\begin{lemma}
The function $A$ is analytic on $\Omega$.  
\end{lemma}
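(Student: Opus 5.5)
To prove that $A$ is analytic on $\Omega$, I would use the standard theorem on holomorphic dependence of parameter integrals. If the integrand is holomorphic in $z$ for each fixed $t$, jointly measurable, and dominated on compact subsets of $\Omega$ by an integrable function of $t$, then the integral is holomorphic. A Morera–Fubini argument gives the same conclusion.

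\emph{Holomorphy of the integrand.} Fix $t\in(0,1)$. The map $z\mapsto 1-zt$ sends $\Omega$ into $\mathbb{C}\setminus(-\infty,0]$. Indeed, $1-zt\in(-\infty,0]$ forces $z=(1-s)/t$ for some $s\leq0$, so $z$ is real with $z\geq 1/t>1$, which is excluded from $\Omega$. The principal branch $w\mapsto w^{-1/2}$ is holomorphic on $\mathbb{C}\setminus(-\infty,0]$. Hence $z\mapsto (1-zt)^{-1/2}$ is holomorphic on $\Omega$ for each $t\in(0,1)$, and it is clearly continuous in $(z,t)$.

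\emph{Uniform bound on compacts.} This is the only step needing care: $t$ ranges up to $1$, so when $z$ is near the cut, $1-zt$ can approach $0$ or the negative axis. Let $K\subset\Omega$ be compact. I claim that $\delta_K:=\inf\{|1-zt| : z\in K,\ t\in[0,1]\}>0$.

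The function $(z,t)\mapsto|1-zt|$ is continuous on the compact set $K\times[0,1]$, so it suffices that it never vanishes there. For $t=0$ its value is $1$. For $t\in(0,1]$, vanishing would force $z=1/t\in[1,\infty)$, which is excluded. Hence $\delta_K>0$ and
\begin{equation*}
\left|\frac{(1-zt)^{-1/2}}{\sqrt{t(1-t)}}\right|\leq \frac{\delta_K^{-1/2}}{\sqrt{t(1-t)}},\quad z\in K,\ 0<t<1.
\end{equation*}
The right-hand side is integrable, since its integral is $\pi\delta_K^{-1/2}$.

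\emph{Conclusion.} Holomorphy then follows in either of two ways.

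One option is dominated convergence. It shows $A$ is continuous on $\Omega$. Then for any closed triangle $T\subset\Omega$, Fubini lets us swap $\oint_{\partial T}$ and $\int_0^1$, and Cauchy–Goursat for the inner integrand gives $\oint_{\partial T}A=0$. Morera's theorem then yields analyticity.

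Alternatively, one can differentiate under the integral sign. Using Cauchy estimates on slightly larger compact sets, the difference quotients are dominated by $C_K/\sqrt{t(1-t)}$, which again gives holomorphy with $A'(z)=\frac{1}{2\pi}\int_0^1 t(1-zt)^{-3/2}(t(1-t))^{-1/2}\,\mathrm{d}t$.

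As a consistency check, for $|z|<1$ one can expand $(1-zt)^{-1/2}$ binomially and integrate term by term using Beta integrals. This recovers $\sum W_n^2 z^n$, matching \eqref{originalseries}.
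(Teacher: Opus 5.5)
Your proof is correct and follows essentially the same route as the paper: compactness of $K\times[0,1]$ gives a uniform positive lower bound for $|1-zt|$, hence the integrable majorant $C_K/\sqrt{t(1-t)}$, and holomorphy then follows from the standard parameter-integral argument. The only difference is cosmetic: the paper bounds every $z$-derivative directly via $\partial_z^m(1-zt)^{-1/2}=(\tfrac12)_m t^m(1-zt)^{-m-1/2}$ and differentiates under the integral sign, whereas you bound only the integrand and conclude via Morera or Cauchy estimates.
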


\begin{proof}
If $z\in\Omega$ and $0<t\leq 1$, then $1-zt\notin(-\infty,0]$. For $t=0$, we have $1-zt=1$. Let $S\Subset\Omega$ be compact and put
\begin{equation*}
K_S=\{1-zt:z\in S, \ 0\leq t\leq 1\}.
\end{equation*}
Then $K_S$ is compact and disjoint from $(-\infty,0]$, so
\begin{equation*}
d_S:=\operatorname{dist}\left(K_S,(-\infty,0]\right)>0.
\end{equation*}
We use the rising factorial notation
\begin{equation*}
(a)_0:=1,\quad (a)_k:=\prod_{j=0}^{k-1}(a+j), \  k=1,2,\ldots.
\end{equation*}
Then for every integer $m\geq 0$,
\begin{equation*}
\frac{\partial^m}{\partial z^m}(1-zt)^{-\frac{1}{2}} =\left(\frac{1}{2}\right)_m t^m(1-zt)^{-m-\frac{1}{2}},
\end{equation*}
and therefore, for $z\in S$ and $0<t<1$,
\begin{equation*}
\left| \frac{\partial^m}{\partial z^m} \left(\frac{(1-zt)^{-\frac{1}{2}}}{\pi\sqrt{t(1-t)}}\right) \right| \leq \frac{\left(\frac{1}{2}\right)_m d_S^{-m-\frac{1}{2}}}{\pi\sqrt{t(1-t)}}.
\end{equation*}
The right-hand side is integrable on $(0,1)$ and independent of $z\in S$. Differentiation under the integral sign gives
\begin{equation*}
A^{(m)}(z)=\frac{\left(\frac{1}{2}\right)_m}{\pi}\int_0^1 \frac{t^m(1-zt)^{-m-\frac{1}{2}}}{\sqrt{t(1-t)}} \mathrm{d}t, \quad z\in S,
\end{equation*}
so $A$ is analytic on $\Omega$. 
\end{proof}

Define
\begin{equation*}
D(z)=\ln4-\frac{1}{2}\operatorname{Log}(1-z),\quad \varphi(z)=\frac{A(z)}{D(z)},\quad z\in\Omega,
\end{equation*}
where $\operatorname{Log}$ denotes the principal logarithm.
Since the principal logarithm is analytic on $\mathbb{C}\setminus(-\infty,0]$ and $1-z\notin(-\infty,0]$ for $z\in\Omega$, the function $D$ is analytic on $\Omega$. Hence $\varphi$ is meromorphic on $\Omega$, with possible poles only at zeros of $D$.

\begin{lemma} \label{polelemma}
The function $\varphi$ has a simple pole at $-15$. Denote its residue by
\begin{equation} \label{Cdef}
C:=\operatorname*{Res}_{z=-15}\varphi(z)=32A(-15),
\end{equation}
then
\begin{equation} \label{Cbound}
0<C<32.
\end{equation}
\end{lemma}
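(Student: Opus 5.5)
The plan is to locate the zero of $D$, check that it is simple, compute the residue directly, and then bound $A(-15)$ using the integral representation \eqref{EulerBeta}.

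\emph{Locating the zero.} Solving $D(z)=0$ means $\operatorname{Log}(1-z)=2\ln4=\ln16$. Because $\operatorname{Log}$ is the principal branch and is injective on $\mathbb{C}\setminus(-\infty,0]$, this equation forces $1-z=16$. Hence $z=-15$, and it is the only zero of $D$ in $\Omega$.

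\emph{Simplicity and the residue.} Differentiating gives
\begin{equation*}
D'(z)=\frac{1}{2(1-z)},\quad D'(-15)=\frac{1}{32}\neq0,
\end{equation*}
so the zero is simple. Once we know $A(-15)\neq0$, $\varphi$ has a genuine simple pole at $-15$. The standard formula $\operatorname{Res}=A/D'$ for a simple zero of the denominator then gives $C=A(-15)/D'(-15)=32A(-15)$, which is \eqref{Cdef}.

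\emph{Bounding $A(-15)$.} For real $z=-15$ we have $1-zt=1+15t>0$ on $[0,1]$, so the principal branch is the positive real root and the integrand is positive. Moreover $(1+15t)^{-1/2}<1$ for $0<t\le1$. Since
\begin{equation*}
\frac1\pi\int_0^1\frac{\mathrm{d}t}{\sqrt{t(1-t)}}=1,
\end{equation*}
it follows that $0<A(-15)<1$. Therefore $0<C<32$, which is \eqref{Cbound}, and the positivity also confirms $A(-15)\neq0$, so the pole is not cancelled.

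The only step requiring care is the uniqueness of the zero of $D$, which rests entirely on the injectivity of the principal logarithm; the other steps are immediate.
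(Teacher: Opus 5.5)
Your proof is correct and follows the paper's argument essentially step by step. You find the unique zero of $D$ at $-15$ via the principal logarithm, check $D'(-15)=\frac{1}{32}\neq0$, and bound $0<A(-15)<1$. The only cosmetic difference is that you bound $A(-15)$ through the $t$-integral \eqref{EulerBeta}, using $\frac{1}{\pi}\int_0^1\frac{\mathrm{d}t}{\sqrt{t(1-t)}}=1$. The paper instead uses the equivalent $\theta$-form $\frac{2}{\pi}\int_0^{\pi/2}(1+15\sin^2\theta)^{-1/2}\,\mathrm{d}\theta$.
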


\begin{proof}
The equation $D(z)=0$ is
\begin{equation*}
\operatorname{Log}(1-z)=\ln 16.
\end{equation*}
Taking the exponential of the identity gives $1-z=16$, because $\exp(\operatorname{Log}(1-z))=1-z$ on $\Omega$. Thus every zero must be $z=-15$. Conversely,
\begin{equation*}
D(-15)=\ln4-\frac{1}{2}\operatorname{Log}(16)=0.
\end{equation*}
Hence $-15$ is the unique zero. Moreover,
\begin{equation*}
D'(z)=\frac{1}{2(1-z)},\quad D'(-15)=\frac{1}{32}.
\end{equation*}
Euler's integral representation gives
\begin{equation*}
0<A(-15)=\frac{2}{\pi}\int_0^{\frac{\pi}{2}}\frac{\mathrm{d}\theta}{\sqrt{1+15\sin^2\theta}}<\frac{2}{\pi}\int_0^{\frac{\pi}{2}}1 \mathrm{d}\theta=1.
\end{equation*} 
Hence $\varphi=A/D$ has a genuine simple pole at $-15$, with
\begin{equation*}
\operatorname*{Res}_{z=-15}\varphi(z)=\frac{A(-15)}{D'(-15)}=32A(-15).
\end{equation*}
This proves \eqref{Cdef}, and the bound $0<A(-15)<1$ gives \eqref{Cbound}.
\end{proof}

Since the only pole of $\varphi$ in $\Omega$ is at $-15$, the function $\varphi$ is analytic on the unit disk. For $0<x<1$,
\begin{equation*}
\varphi(x)=\frac{A(x)}{D(x)}=\frac{\frac{2}{\pi}\mathcal{K}(\sqrt{x})}{\ln4-\frac{1}{2}\ln(1-x)}=\frac{2}{\pi}f_{\ln4}(x).
\end{equation*}
Hence, by \eqref{originalseries} and the uniqueness of Taylor coefficients,
\begin{equation} \label{varphiseries}
\varphi(z)=\sum_{n=0}^{\infty}a_nz^n,\quad |z|<1.
\end{equation}

Here and below, $A(t+i0)$, $D(t+i0)$ and $\varphi(t+i0)$ denote the limits from the upper half-plane. The next lemma computes these boundary values.

\begin{lemma}
Let $t>1$ and
\begin{equation} \label{notation}
r=\frac{1}{\sqrt t},\quad r'=\sqrt{1-r^2},\quad c=\ln\frac{4r}{r'}.
\end{equation}
Then
\begin{equation} \label{Aboundary} 
A(t+i0)=\frac{2r}{\pi}\left(\mathcal{K}(r)+i\mathcal{K}(r')\right)
\end{equation}
and
\begin{equation} \label{Dboundary} 
D(t+i0)=c+\frac{i\pi}{2}.
\end{equation}
Consequently,
\begin{equation} \label{Imvarphi} 
\operatorname{Im}\varphi(t+i0) =\frac{2r}{\pi}\frac{c\mathcal{K}(r')-\frac{\pi}{2}\mathcal{K}(r)}{c^2+\frac{\pi^2}{4}}.
\end{equation}
Moreover, for every compact interval $J\Subset(1,\infty)$,
\begin{equation} \label{Auniformboundary}
\sup_{t\in J}|A(t+i\eta)-A(t+i0)|\to0,\quad \eta\to0^+.
\end{equation}
\end{lemma}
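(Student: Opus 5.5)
I will compute the boundary values directly from the Euler integral \eqref{EulerBeta}. Fix $t>1$ and set $z=t+i\eta$ with $\eta\downarrow0$. For $s\in(0,1)$ we have $1-zs=(1-ts)-i\eta s$. When $s<1/t$ this tends to the positive number $1-ts$, and the principal inverse square root converges to $(1-ts)^{-1/2}$. When $s>1/t$ the point $1-zs$ approaches the negative real axis from below, since its imaginary part is $-\eta s<0$. Hence $\operatorname{Log}(1-zs)\to\ln(ts-1)-i\pi$, and therefore
\begin{equation*}
(1-zs)^{-\frac12}\to(ts-1)^{-\frac12}e^{i\pi/2}=i\,(ts-1)^{-\frac12}.
\end{equation*}
Thus the pointwise limit of the integrand is
\begin{equation*}
\frac{1}{\pi\sqrt{s(1-s)}}\Bigl((1-ts)^{-\frac12}\mathbf 1_{s<1/t}+i(ts-1)^{-\frac12}\mathbf 1_{s>1/t}\Bigr).
\end{equation*}

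To pass to the limit, and simultaneously obtain \eqref{Auniformboundary}, I will use dominated convergence with a bound that is uniform in $t\in J$. The inequality $|1-zs|\ge|1-ts|$ gives the majorant
\begin{equation*}
\frac{|1-ts|^{-1/2}}{\pi\sqrt{s(1-s)}} .
\end{equation*}
This is integrable, since its singularities at $s=0$, $s=1/t$ and $s=1$ are all of order $-1/2$ and are separated because $t>1$. Uniformity over compact $J$ is the only delicate point, because the singular point $1/t$ moves. I plan to handle it by splitting off a small neighbourhood of the interval $\{1/t:t\in J\}$. On that neighbourhood, the integral of $|1-ts|^{-1/2}$ over $|s-1/t|<\delta$ is $O(\sqrt{\delta})$ uniformly in $t\in J$. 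On the complement the integrand converges uniformly. Combining the two estimates gives \eqref{Auniformboundary}.

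Next I evaluate the two real integrals. For the real part I substitute $s=u/t=r^2u$, which maps $(0,1/t)$ onto $(0,1)$:
\begin{equation*}
\frac1\pi\int_0^{1/t}\frac{ds}{\sqrt{s(1-s)(1-ts)}}=\frac{r}{\pi}\int_0^1\frac{du}{\sqrt{u(1-u)(1-r^2u)}}=\frac{2r}{\pi}\mathcal K(r),
\end{equation*}
where the last step uses the $\sin^2\theta$ substitution already used in the paper. For the imaginary part, on $(1/t,1)$ I substitute
\begin{equation*}
s=\frac{1}{t}+\Bigl(1-\frac1t\Bigr)v=r^2+r'^2v .
\end{equation*}
Then $1-s=r'^2(1-v)$ and $ts-1=(r'^2/r^2)v$, while $s=r^2(1-r'^2(1-v)/r^2\cdot\ldots)$. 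The cleaner route is to write $s=r^2+r'^2v=1-r'^2(1-v)$ and substitute $w=1-v$. The integral then becomes
\begin{equation*}
\frac{r}{\pi}\int_0^1\frac{dw}{\sqrt{w(1-w)(1-r'^2w)}}=\frac{2r}{\pi}\mathcal K(r').
\end{equation*}
I will check the constants carefully. The Jacobian $r'^2$ cancels against $r'$ from $\sqrt{1-s}$ and $r'/r$ from $\sqrt{ts-1}$, which leaves exactly the factor $r$. This proves \eqref{Aboundary}.

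For $D$, I note that $1-z\to1-t<0$ from below, so $\operatorname{Log}(1-z)\to\ln(t-1)-i\pi$. Hence
\begin{equation*}
D(t+i0)=\ln4-\tfrac12\ln(t-1)+\tfrac{i\pi}2 .
\end{equation*}
Since $t-1=r'^2/r^2$, the real part is $\ln4-\ln(r'/r)=\ln(4r/r')=c$, which proves \eqref{Dboundary}. Finally,
\begin{equation*}
\operatorname{Im}\frac{A}{D}=\frac{\operatorname{Im}A\cdot\operatorname{Re}D-\operatorname{Re}A\cdot\operatorname{Im}D}{|D|^2}.
\end{equation*}
Substituting the two boundary values gives \eqref{Imvarphi} directly. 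The main obstacle is the uniform dominated-convergence step near the moving singularity $1/t$. The remaining steps are careful bookkeeping of the branch of the square root, where the sign $+i$ must come out correctly, and of the constants in the substitutions.
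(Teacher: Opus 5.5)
Your proof is correct and takes essentially the paper's route. Like the paper, you take pointwise boundary limits of the Euler integral, get the $+i$ from approaching the cut from below, obtain uniformity on $J$ by splitting off a $\delta$-neighbourhood of the moving singular point with an $O(\sqrt{\delta})$ bound, and reduce the two pieces to $\mathcal K(r)$ and $\mathcal K(r')$ by substitution. The only difference is cosmetic: you work in $s=\sin^2\theta$ rather than $\theta$. There the identity $|1-ts|=t\,|s-1/t|$ makes the near-singularity estimate exact, so you do not need the paper's mean-value lower bound $|h_t(\theta)|\ge c_J|\theta-\theta_0(t)|$.
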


\begin{proof}
Euler's integral formula for $A$ is
\begin{equation} \label{EulerA} 
A(z)=\frac{2}{\pi}\int_0^{\frac{\pi}{2}}(1-z\sin^2\theta)^{-\frac{1}{2}} \mathrm{d}\theta.
\end{equation}
For a fixed compact interval $J=[t_0,t_1]\Subset(1,\infty)$, denote that
\begin{equation*}
\theta_0(t)=\arcsin(t^{-\frac{1}{2}}), \quad h_t(\theta)=1-t\sin^2\theta.
\end{equation*}
Then
\begin{equation*}
\left|\frac{\partial h_t}{\partial\theta}(\theta_0(t))\right| =2\sqrt{t-1}\geq 2\sqrt{t_0-1}>0 
\end{equation*}
and the map $(t,\theta)\mapsto\partial_\theta h_t(\theta)$ is continuous. Since $J$ is compact and $t\mapsto\theta_0(t)$ is continuous, there exist $\delta_J>0$ and $c_J>0$ such that
\begin{equation*}
|\partial_\theta h_t(\theta)|\geq c_J
\end{equation*}
if $t\in J$ and $|\theta-\theta_0(t)|<\delta_J$. Since $h_t(\theta_0(t))=0$, the mean-value theorem gives
\begin{equation*}
|h_t(\theta)|\geq c_J|\theta-\theta_0(t)|.
\end{equation*}
Therefore, for $\eta>0$,
\begin{equation*}
\left|1-(t+i\eta)\sin^2\theta\right|^{-\frac{1}{2}}\leq c_J^{-\frac{1}{2}}|\theta-\theta_0(t)|^{-\frac{1}{2}}.
\end{equation*}
The same estimate holds for the upper boundary value obtained as $\eta\to0^+$. Hence
\begin{equation*}
\int_{|\theta-\theta_0(t)|<d}\left|1-(t+i\eta)\sin^2\theta\right|^{-\frac{1}{2}}\mathrm{d}\theta\leq4c_J^{-\frac{1}{2}}\sqrt{d}
\end{equation*}
for $0<d<\delta_J$, uniformly in $t\in J$ and $\eta\geq0$. On $|\theta-\theta_0(t)|\geq d$, the denominator is uniformly bounded away from zero, and the integrand converges uniformly to its upper boundary value as $\eta\to0^+$. Splitting the integral into these two regions and then letting $\eta\to0^+$ and $d\to0^+$ gives
\begin{equation*} 
\sup_{t\in J}|A(t+i\eta)-A(t+i0)|\to0,\quad \eta\to0^+.
\end{equation*}

If $0<\theta<\theta_0$, then $1-t\sin^2\theta>0$. If $\theta_0<\theta<\frac{\pi}{2}$, then
\begin{equation*}
1-(t+i0)\sin^2\theta=-(t\sin^2\theta-1)-i0.
\end{equation*} 
Thus \eqref{EulerA} gives
\begin{equation} \label{splitA} 
A(t+i0)=\frac{2}{\pi}\left[ \int_0^{\theta_0}\frac{\mathrm{d}\theta}{\sqrt{1-t\sin^2\theta}} +i\int_{\theta_0}^{\frac{\pi}{2}}\frac{\mathrm{d}\theta}{\sqrt{t\sin^2\theta-1}} \right].
\end{equation}
For the first integral in \eqref{splitA}, set $\sin\theta=r\sin u$. Since $\sin\theta_0=r$, the variable $u$ runs from $0$ to $\frac{\pi}{2}$, and
\begin{equation*}
\mathrm{d}\theta=\frac{r\cos u}{\sqrt{1-r^2\sin^2u}}\mathrm{d}u,\quad \sqrt{r^2-\sin^2\theta}=r\cos u.
\end{equation*}
Therefore
\begin{equation*}
r\int_0^{\theta_0}\frac{\mathrm{d}\theta}{\sqrt{r^2-\sin^2\theta}}=r\int_0^{\frac{\pi}{2}}\frac{\mathrm{d}u}{\sqrt{1-r^2\sin^2u}}=r\mathcal{K}(r).
\end{equation*} 
For the second integral in \eqref{splitA}, set $\cos\theta=r'\sin u$. Since $\cos\theta_0=r'$, the variable $u$ decreases from $\frac{\pi}{2}$ to $0$, and
\begin{equation*}
\mathrm{d}\theta=-\frac{r'\cos u}{\sqrt{1-(r')^2\sin^2u}}\mathrm{d}u,\quad \sqrt{\sin^2\theta-r^2}=r'\cos u.
\end{equation*}
Reversing the limits gives
\begin{equation*}
r\int_{\theta_0}^{\frac{\pi}{2}}\frac{\mathrm{d}\theta}{\sqrt{\sin^2\theta-r^2}}=r\int_0^{\frac{\pi}{2}}\frac{\mathrm{d}u}{\sqrt{1-(r')^2\sin^2u}}=r\mathcal{K}(r').
\end{equation*}
Together with the preceding identity, this yields \eqref{Aboundary}.

For the denominator, since
\begin{equation*}
1-(t+i0)=-(t-1)-i0,
\end{equation*}
we have
\begin{equation*}
\operatorname{Log}(1-t-i0)=\ln(t-1)-i\pi.
\end{equation*}
Moreover,
\begin{equation*}
t-1=\frac{1-r^2}{r^2}=\frac{(r')^2}{r^2}.
\end{equation*}
Therefore,
\begin{equation*}
D(t+i0)=\ln 4-\frac{1}{2}\ln(t-1)+\frac{i\pi}{2} =\ln\frac{4r}{r'}+\frac{i\pi}{2},
\end{equation*}
which is \eqref{Dboundary}. Finally,
\begin{equation*}
\begin{aligned}
\varphi(t+i0)=&\frac{2r}{\pi} \frac{\mathcal{K}(r)+i\mathcal{K}(r')}{c+i\frac{\pi}{2}} \\
=&\frac{2r}{\pi} \frac{\left(c\mathcal{K}(r)+\frac{\pi}{2}\mathcal{K}(r')\right) +i\left(c\mathcal{K}(r')-\frac{\pi}{2}\mathcal{K}(r)\right)} {c^2+\frac{\pi^2}{4}}.
\end{aligned}
\end{equation*}
Taking the imaginary part gives \eqref{Imvarphi}.
\end{proof}

\subsection{Positive density and a moment representation}

The sign in \eqref{Imvarphi} will follow from a real-variable inequality for $\frac{\mathcal{K}(r)}{\mathcal{K}(r')}$.  
Applying \cite[Corollary 1.9]{PonnusamyVuorinen1997}
with $a=b=\frac{1}{2}$ and $x=r^2$, we obtain 
\begin{equation*}
0<\mathcal{K}(r)-\ln\frac4{r'}<\frac{(r')^2}{r^2}\ln\frac{1}{r'}, \quad 0<r<1.
\end{equation*}
Since $r^{-2}$ remains bounded as $r\to1^-$, this yields 
\begin{equation} \label{Kasymp} 
\mathcal{K}(r)=\ln\frac4{r'}+O\left((r')^2\ln\frac{1}{r'}\right), \quad r\to1^-.
\end{equation}

\begin{lemma} \label{ellipticinequalitylemma}
For every $0<r<1$,
\begin{equation} \label{ellipticinequality}
\frac{\pi}{2}\frac{\mathcal{K}(r)}{\mathcal{K}(r')}>\ln\frac{4r}{r'}.
\end{equation}
\end{lemma}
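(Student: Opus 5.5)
The plan is to restate \eqref{ellipticinequality} in terms of the Grötzsch modulus $\mu(s)=\frac{\pi}{2}\frac{\mathcal{K}(s')}{\mathcal{K}(s)}$ and then reduce it to a classical lower bound for $\mu$. With $s=r'$ we have $s'=r$, so the left-hand side of \eqref{ellipticinequality} equals $\mu(r')$. The claim therefore becomes
\begin{equation*}
\mu(s)>\ln\frac{4s'}{s},\quad 0<s<1.
\end{equation*}

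The key input will be the known lower estimate for the modulus from the Anderson--Vamanamurthy--Vuorinen theory:
\begin{equation*}
\mu(s)>\ln\frac{(1+\sqrt{s'})^2}{s},\quad 0<s<1.
\end{equation*}
I would cite it, or derive it from Landen's transformation together with the monotonicity of $\mu(s)+\ln s$. Granting this bound, the elementary comparison
\begin{equation*}
(1+\sqrt{s'})^2-4s'=(1-\sqrt{s'})^2>0,\quad 0<s'<1,
\end{equation*}
gives $\ln\frac{(1+\sqrt{s'})^2}{s}>\ln\frac{4s'}{s}$. This yields the desired inequality strictly for every $0<s<1$, and hence \eqref{ellipticinequality} for every $0<r<1$.

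The main obstacle is justifying the lower bound for $\mu$ if it is not simply quoted. For a self-contained argument I would first establish that $\mu(s)+\ln s$ is strictly decreasing from $\ln4$ to $0$ on $(0,1)$. This follows from the standard derivative formula $\mu'(s)=-\frac{\pi^2}{4ss'^2\mathcal{K}(s)^2}$ combined with the classical inequality $\mathcal{K}(s)>\frac{\pi}{2}$ sharpened appropriately, or directly from the $q$-series $s=\theta_2^2(q)/\theta_3^2(q)$ with $q=e^{-\mu(s)}$.

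Next I would apply the Landen identity $\mu(s)=2\mu(t)$ with $t=\frac{1-s'}{1+s'}$. Applying the decreasing property at $t$, and using the Landen relation once more to pass to a quadratic transform of $s'$, should produce exactly the factor $(1+\sqrt{s'})^2$. If the bookkeeping of which Landen direction to use becomes delicate, a fallback is the theta-function route: write $s'=\theta_4^2/\theta_3^2$ and $s=\theta_2^2/\theta_3^2$. Then $\frac{4s'}{s}e^{-\mu(s)}=\frac{4q\,\theta_4^2}{\theta_2^2}$, and one checks from the product formulas that this quantity is less than $1$ for $0<q<1$, because $\theta_2^2\sim4q$ while $\theta_4<1$.
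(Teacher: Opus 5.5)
Your main line is the paper's proof. The bound $\mu(s)>\ln\frac{(1+\sqrt{s'})^2}{s}$ you quote is exactly the NIST inequality (19.9.5) that the paper cites with $k=s=r'$, and the paper then compares $(1+\sqrt{r})^2$ with $4r$ just as you do.

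There is one slip in that comparison. $(1+\sqrt{s'})^2-4s'$ is not $(1-\sqrt{s'})^2$; that expression equals $(1+\sqrt{s'})^2-4\sqrt{s'}$. The correct factorization is $(1-\sqrt{s'})(1+3\sqrt{s'})$. It is still positive for $0<s'<1$, so your conclusion stands.

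Your self-contained Landen fallback has two direction errors:
\begin{itemize}
\item Landen goes the other way: $\mu\bigl(\frac{1-s'}{1+s'}\bigr)=2\mu(s)$.
\item The decrease of $\mu(s)+\ln s$ needs the upper bound $s'\mathcal{K}(s)<\frac{\pi}{2}$, which is immediate from $1-s^2\sin^2\theta\geq s'^2$. It does not come from a lower bound on $\mathcal{K}$.
\end{itemize}
With these corrected, compare $\mu+\ln$ at $t=\frac{1-s'}{1+s'}$ and at $s$, using $t<s$. This gives $\mu(s)>\ln\frac{s}{t}=\ln\frac{(1+s')^2}{s}$ in a single step. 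Since $(1+s')^2-4s'=(1-s')^2>0$, this weaker bound already suffices, and you never need to produce $(1+\sqrt{s'})^2$.

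In the theta route, $\theta_2^2\sim4q$ is not enough. You need $\theta_2^2\geq4q$ for all $0<q<1$. This holds because $\theta_2^2/(4q)$ is a power series with nonnegative coefficients and constant term $1$.
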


\begin{proof}
By \cite[(19.9.5)]{NISTHandbook}, applied with $k=r'$,
\begin{equation*}
\frac{\pi}{2}\frac{\mathcal{K}(r)}{\mathcal{K}(r')}>\ln\frac{(1+\sqrt{r})^2}{r'}.
\end{equation*}
Since
\begin{equation*}
(1+\sqrt{r})^2-4r=(1-\sqrt{r})(1+3\sqrt{r})>0,
\end{equation*}
we have
\begin{equation*}
\ln\frac{(1+\sqrt{r})^2}{r'}>\ln\frac{4r}{r'}.
\end{equation*}
Thus \eqref{ellipticinequality} follows.
\end{proof}

Combining \eqref{Imvarphi} with \eqref{ellipticinequality} gives
\begin{equation*}
c\mathcal{K}(r')-\frac{\pi}{2}\mathcal{K}(r)<0,
\end{equation*}
and therefore
\begin{equation*}
\operatorname{Im}\varphi(t+i0)<0,\quad t>1.
\end{equation*}
Set
\begin{equation*}
\rho(t)=-\frac{1}{\pi}\operatorname{Im}\varphi(t+i0),\quad t>1.
\end{equation*}
Then $\rho(t)>0$ for every $t>1$.

\begin{lemma}
For every integer $n\geq 0$,
\begin{equation} \label{momentformula} 
a_n=\frac{(-1)^nC}{15^{n+1}}-M_n, \quad M_n=\int_1^\infty\frac{\rho(t)}{t^{n+1}} \mathrm{d}t>0.
\end{equation}
\end{lemma}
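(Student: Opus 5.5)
We derive the moment formula from the Cauchy integral formula for the Taylor coefficients of $\varphi$, deforming the contour outward to a keyhole around the cut $[1,\infty)$ that also encloses the pole at $-15$.

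For $0<\varepsilon<1$ small and $R>16$ large, let $\Gamma_{R,\varepsilon}$ be the positively oriented contour made of four pieces: the circle $|z|=R$, excluding a thin neighbourhood of the cut; the segment $[1+\varepsilon,R]$ traversed along the lower side (from $R$ to $1+\varepsilon$); the segment $[1+\varepsilon,R]$ traversed along the upper side (from $1+\varepsilon$ to $R$); and a small circle of radius $\varepsilon$ about $1$. The region bounded by this contour contains $0$ and $-15$ and no other singularities of $\varphi(z)/z^{n+1}$. The residue theorem then gives
\begin{equation*}
\frac{1}{2\pi i}\oint_{\Gamma_{R,\varepsilon}}\frac{\varphi(z)}{z^{n+1}}\,\mathrm{d}z=a_n+\operatorname*{Res}_{z=-15}\frac{\varphi(z)}{z^{n+1}}=a_n+\frac{C}{(-15)^{n+1}}.
\end{equation*}
Hence $a_n=\frac{(-1)^nC}{15^{n+1}}+\frac{1}{2\pi i}\oint\cdots$, using $(-15)^{-(n+1)}=-(-1)^n15^{-(n+1)}$ and \eqref{Cdef}. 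Since $A$ and $D$ are real on $(-\infty,1)$, Schwarz reflection gives $\varphi(t-i0)=\overline{\varphi(t+i0)}$. The two edges of the cut therefore contribute
\begin{equation*}
\frac{1}{2\pi i}\int_{1+\varepsilon}^{R}\frac{\varphi(t+i0)-\varphi(t-i0)}{t^{n+1}}\,\mathrm{d}t=\frac1\pi\int_{1+\varepsilon}^R\frac{\operatorname{Im}\varphi(t+i0)}{t^{n+1}}\,\mathrm{d}t=-\int_{1+\varepsilon}^R\frac{\rho(t)}{t^{n+1}}\,\mathrm{d}t.
\end{equation*}
Passing from the contour integral to these boundary values, via limits $\eta\to0^+$ along horizontal lines, is justified on compact subintervals by \eqref{Auniformboundary}, together with the explicit form of $D$, which is continuous up to the cut and nonvanishing there because $\operatorname{Im}D(t+i0)=\pi/2$.

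It remains to show that the small circle and the large circle contribute nothing in the limit.

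\emph{Small circle.} Near $z=1$, the integral \eqref{EulerA} gives $|A(z)|=O(\ln(1/|1-z|))$, while $|D(z)|\ge |\tfrac12\operatorname{Log}(1-z)|-\ln4\to\infty$. So $\varphi$ is bounded near $1$, and the small circle contributes $O(\varepsilon)\to0$.

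\emph{Large circle.} For $|z|=R$, $z\notin[1,\infty)$, I expect $|A(z)|=O(R^{-1/2}\ln R)$. This is the standard large-argument behaviour of ${}_2F_1(\tfrac12,\tfrac12;1;z)$, and it can be read off from \eqref{EulerBeta}: the mass near $t\lesssim 1/R$ gives $O(R^{-1/2})$, and the rest gives $O(R^{-1/2}\ln R)$, uniformly on the slit disk since $|1-zt|\ge c\,|z|t$ for $t$ not small, except near the cut, where it is handled as in the boundary-value lemma. Also $|D(z)|\ge \tfrac12\ln(R-1)-\ln 4-\pi/2\to\infty$. Hence the large-circle contribution is $O(R^{-n-1/2}\ln R)\to0$ for every $n\ge0$.

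Letting $\varepsilon\to0$ and $R\to\infty$ yields \eqref{momentformula}, provided $M_n$ converges. Convergence at infinity follows from $\rho(t)=O(t^{-1/2})$, which holds by the same bound $|\varphi(t+i0)|=O(t^{-1/2}\ln t)$. Integrability near $t=1$ follows because, as $r\to1^-$, \eqref{Kasymp} makes $\varphi(t+i0)$ bounded. Finally, $M_n>0$ because $\rho>0$ on $(1,\infty)$.

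The main obstacle is the uniform control of $A$ on the large circle and near the branch point. I would first try the direct estimate from \eqref{EulerBeta} described above. If that is messy, I would fall back on the connection formula expressing ${}_2F_1(\tfrac12,\tfrac12;1;z)$ through $(-z)^{-1/2}$ times logarithmic terms; with it, $\varphi(z)\,z^{1/2}$ is seen to be $O(\ln|z|/\ln|z|)=O(1)$.
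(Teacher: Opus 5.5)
Your proposal is correct and follows essentially the same route as the paper: a keyhole contour around $[1,\infty)$ enclosing $0$ and $-15$, residues $a_n$ and $C/(-15)^{n+1}$, the jump $2i\operatorname{Im}\varphi(t+i0)$ across the cut via reflection, and vanishing contributions from the small and large circles. The step you leave as an expectation, namely the uniform bounds $|A(z)|=O(R^{-1/2}\ln R)$ on $|z|=R$ and $|A(z)|=O(\ln\frac{1}{\varepsilon})$ on $|z-1|=\varepsilon$, is closed in the paper by the reverse triangle inequality $|1-z\sin^2\theta|\geq|1-s\sin^2\theta|$. Here $s=R$ on the large circle and $s=1+\varepsilon$ on the small one, which reduces both bounds to the explicit real integral $\frac{2r}{\pi}\left(\mathcal{K}(r)+\mathcal{K}(r')\right)$ with $r=s^{-1/2}$, so neither your near-cut splitting nor the connection formula is needed.
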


\begin{proof}
Fix an integer $n\geq0$ and set
\begin{equation*}
G_n(z)=\frac{\varphi(z)}{z^{n+1}}.
\end{equation*}
Let $R>16$, $0<\varepsilon<\frac{1}{2}$ and $0<\delta<\frac{\pi}{2}$. Let $\Gamma_{R,\varepsilon,\delta}$ be the positively oriented boundary of
\begin{equation*}
\left\{z\in\mathbb{C}:|z|<R,\ |z-1|>\varepsilon,\ \delta<\arg(z-1)<2\pi-\delta\right\},
\end{equation*}
where $\arg(z-1)\in(0,2\pi)$. Then $\Gamma_{R,\varepsilon,\delta}\subset\Omega$, and both $0$ and $-15$ lie in its interior. As $\delta\to0^+$, the two radial segments approach the upper and lower sides of the cut.

\textit{Step 1: Estimates on the contour boundary}. 
Let $0<\varepsilon<\frac{1}{2}$ and let $z\in\Omega$ satisfy $|z-1|=\varepsilon$. Write $z-1=\varepsilon\mathrm{e}^{i\gamma}$. Then for $0\leq\theta\leq\frac{\pi}{2}$,
\begin{equation*}
\left|1-z\sin^2\theta\right|^2=\left|\cos^2\theta-\varepsilon\mathrm{e}^{i\gamma}\sin^2\theta\right|^2\geq\left(\cos^2\theta-\varepsilon\sin^2\theta\right)^2.
\end{equation*}
Hence
\begin{equation*}
|A(z)|\leq \frac{2}{\pi}\int_0^{\frac{\pi}{2}} \frac{\mathrm{d}\theta}{\sqrt{\left|1-(1+\varepsilon)\sin^2\theta\right|}}.
\end{equation*}
Put
\begin{equation*}
r_\varepsilon=\frac{1}{\sqrt{1+\varepsilon}},\quad r_\varepsilon'=\sqrt{\frac{\varepsilon}{1+\varepsilon}}, \quad \theta_\varepsilon=\arcsin r_\varepsilon.
\end{equation*}
Since $1+\varepsilon=r_\varepsilon^{-2}$, the substitution $\sin\theta=r_\varepsilon\sin u$ gives
\begin{equation*}
\int_0^{\theta_\varepsilon}\frac{\mathrm{d}\theta}{\sqrt{1-(1+\varepsilon)\sin^2\theta}}=r_\varepsilon\mathcal{K}(r_\varepsilon).
\end{equation*}
For the second integral, the substitution $\cos\theta=r_\varepsilon'\sin u$ yields
\begin{equation*}
\int_{\theta_\varepsilon}^{\frac{\pi}{2}}\frac{\mathrm{d}\theta}{\sqrt{(1+\varepsilon)\sin^2\theta-1}}=r_\varepsilon\mathcal{K}(r_\varepsilon').
\end{equation*}
Consequently,
\begin{equation*}
|A(z)|\leq\frac{2r_\varepsilon}{\pi}\left(\mathcal{K}(r_\varepsilon)+\mathcal{K}(r_\varepsilon')\right).
\end{equation*}
By \eqref{Kasymp}, $\mathcal{K}(r_\varepsilon)=O(\ln\frac{1}{\varepsilon})$, while \eqref{originalseries} gives $\mathcal{K}(r_\varepsilon')=O(1)$. Therefore, it holds that
\begin{equation} \label{Anearonebound} 
|A(z)|=O\left(\ln\frac{1}{\varepsilon}\right)
\end{equation}
uniformly on $|z-1|=\varepsilon$ in $\Omega$. On the same circle,
\begin{equation*}
\operatorname{Re} D(z)=\ln4-\frac{1}{2}\ln\varepsilon,
\end{equation*}
so $|D(z)|\geq \ln4-\frac{1}{2}\ln\varepsilon$. Combining this with \eqref{Anearonebound} yields
\begin{equation} \label{varphinearone} 
\sup_{\substack{|z-1|=\varepsilon\\ z\in\Omega}}|\varphi(z)|=O(1), \quad \varepsilon\to0^+.
\end{equation}

Let $t\to1^+$ and use the notation of \eqref{notation}. Put $s=r'$. Then $s\to0^+$,
\begin{equation*}
t-1=\frac{s^2}{1-s^2},\quad \ln r=\frac{1}{2}\ln(1-s^2)=O(s^2).
\end{equation*}
With $L=\ln\frac{4}{s}$, formulas \eqref{Kasymp} and \eqref{originalseries} give
\begin{equation*}
\mathcal{K}(r)=L+O\left(s^2L\right),\quad \mathcal{K}(s)=\frac{\pi}{2}+O(s^2),\quad c=L+O(s^2).
\end{equation*}
Using these three expansions gives
\begin{equation*}
\begin{aligned}
c\mathcal{K}(s)-\frac{\pi}{2}\mathcal{K}(r)&=\left(L+O(s^2)\right)\left(\frac{\pi}{2}+O(s^2)\right) -\frac{\pi}{2}\left(L+O(s^2L)\right)\\
&=O(s^2L).
\end{aligned}
\end{equation*}
Moreover, $c=L+O(s^2)$ and $L\to\infty$, so $\frac cL\to1$. Hence, for all sufficiently small $s$, $|c|\geq \frac L2$ and therefore
\begin{equation*}
c^2+\frac{\pi^2}{4}\geq \frac{L^2}{4}.
\end{equation*}
Since $r\leq 1$, formula \eqref{Imvarphi} gives
\begin{equation*}
\rho(t) =O\left(\frac{s^2L}{L^2}\right) =O\left(\frac{s^2}{L}\right).
\end{equation*}
Finally,
\begin{equation*}
t-1=\frac{s^2}{1-s^2}\sim s^2,
\end{equation*}
and $L\geq 1$ for small $s$. Hence $\rho(t)=O(t-1)$ as $t\to1^+$. In particular, $\rho$ is integrable on $(1,1+\eta)$ for some $\eta>0$.

For the outer circle, let $|z|=R>1$ and write $z=R\mathrm{e}^{i\gamma}$. Since
\begin{equation*}
\left|1-R\mathrm{e}^{i\gamma}\sin^2\theta\right|^2\geq\left(1-R\sin^2\theta\right)^2,
\end{equation*}
Euler's integral representation gives
\begin{equation*}
|A(z)|\leq\frac{2}{\pi}\int_0^{\frac{\pi}{2}}\frac{\mathrm{d}\theta}{\sqrt{|1-R\sin^2\theta|}}.
\end{equation*}
Set
\begin{equation*}
r=R^{-\frac{1}{2}},\quad \theta_0=\arcsin r.
\end{equation*}
Then
\begin{equation*}
\int_0^{\frac{\pi}{2}}\frac{\mathrm{d}\theta}{\sqrt{|1-R\sin^2\theta|}}=\int_0^{\theta_0}\frac{\mathrm{d}\theta}{\sqrt{1-R\sin^2\theta}}+\int_{\theta_0}^{\frac{\pi}{2}}\frac{\mathrm{d}\theta}{\sqrt{R\sin^2\theta-1}}.
\end{equation*}
Since $R=\frac{1}{r^2}$, the substitutions $\sin\theta=r\sin u$ and $\cos\theta=r'\sin u$ give
\begin{equation*}
\int_0^{\theta_0}\frac{\mathrm{d}\theta}{\sqrt{1-R\sin^2\theta}}=r\mathcal{K}(r)
\end{equation*}
and
\begin{equation*}
\int_{\theta_0}^{\frac{\pi}{2}}\frac{\mathrm{d}\theta}{\sqrt{R\sin^2\theta-1}}=r\mathcal{K}(r').
\end{equation*}
Therefore,
\begin{equation*}
|A(z)|\leq\frac{2r}{\pi}\left(\mathcal{K}(r)+\mathcal{K}(r')\right).
\end{equation*}
As $R\to\infty$, we have $r=R^{-\frac{1}{2}}\to0$. The power series \eqref{originalseries} of $\mathcal{K}$ gives
\begin{equation*}
\mathcal{K}(r)=O(1),
\end{equation*}
while \eqref{Kasymp} implies
\begin{equation*}
\mathcal{K}(r')=O\left(\ln\frac{1}{r}\right)=O(\ln R).
\end{equation*}
Hence
\begin{equation*}
|A(z)|=O\left(R^{-\frac{1}{2}}\ln R\right)
\end{equation*}
uniformly for $|z|=R$ with $z\in\Omega$.

Moreover,
\begin{equation*}
\operatorname{Re}D(z)=\ln4-\frac{1}{2}\ln|1-z|.
\end{equation*}
Since $|1-z|\geq R-1$, for all sufficiently large $R$,
\begin{equation*}
|D(z)|\geq-\operatorname{Re}D(z)\geq\frac{1}{2}\ln(R-1)-\ln4\geq\frac{1}{4}\ln R.
\end{equation*}
Combining the preceding estimates yields
\begin{equation} \label{largevarphi}
\sup_{\substack{|z|=R\\ z\in\Omega}}|\varphi(z)|=O\left(R^{-\frac{1}{2}}\right).
\end{equation}

On the upper side of the cut, let $r=t^{-\frac{1}{2}}\to0$ and set $L=\ln\frac{1}{r}$. The power series \eqref{originalseries} of $\mathcal{K}$ and \eqref{Kasymp} applied to $\mathcal{K}(r')$, give
\begin{equation*}
\mathcal{K}(r)=O(1) \quad \mathcal{K}(r')=O(L).
\end{equation*}
Since $r'=\sqrt{1-r^2}=1+O(r^2)$, we have $\ln r'=O(r^2)$ and hence
\begin{equation*}
c=\ln\frac{4r}{r'}=-L+\ln4+O(r^2).
\end{equation*}
Therefore, for all sufficiently small $r$,
\begin{equation*}
\frac{1}{2}L\leq |c|\leq2L.
\end{equation*}
Then we obtain
\begin{equation*}
\left|c\mathcal{K}(r')-\frac{\pi}{2}\mathcal{K}(r)\right|\leq |c|\mathcal{K}(r')+\frac{\pi}{2}\mathcal{K}(r)=O(L^2).
\end{equation*}
Moreover,
\begin{equation*}
c^2+\frac{\pi^2}{4}\geq c^2\geq\frac{1}{4}L^2.
\end{equation*}
It follows from \eqref{Imvarphi} that
\begin{equation*}
|\operatorname{Im}\varphi(t+i0)|=O(r).
\end{equation*}
Since $r=t^{-\frac{1}{2}}$,
\begin{equation*}
\rho(t)=O\left(t^{-\frac{1}{2}}\right),\quad t\to\infty.
\end{equation*}
Together with $\rho(t)=O(t-1)$ as $t\to1^+$, this yields
\begin{equation*}
\int_1^\infty\frac{\rho(t)}{t^{n+1}}\mathrm{d}t<\infty,\quad n\geq0,
\end{equation*}
Thus $M_n$ in \eqref{momentformula} is finite for every $n\geq0$.

\vspace{2mm}

\noindent\textit{Step 2: Residue evaluation and passage to the branch cut}. 
Inside $\Gamma_{R,\varepsilon,\delta}$, the function $G_n$ has precisely two poles. Since
\begin{equation*}
G_n(z)=\sum_{k=0}^{\infty}a_kz^{k-n-1}
\end{equation*}
near $z=0$, we have
\begin{equation*}
\operatorname*{Res}_{z=0}G_n(z)=a_n.
\end{equation*}
At $z=-15$, Lemma \ref{polelemma} gives
\begin{equation*}
\operatorname*{Res}_{z=-15}G_n(z)=\frac{C}{(-15)^{n+1}}=\frac{(-1)^{n+1}C}{15^{n+1}}.
\end{equation*}
Hence the residue theorem yields
\begin{equation} \label{residueidentity}
\frac{1}{2\pi i}\int_{\Gamma_{R,\varepsilon,\delta}}G_n(z)\mathrm{d}z=a_n+\frac{(-1)^{n+1}C}{15^{n+1}}.
\end{equation}

Let $s_R(\delta)>0$ be determined by
\begin{equation*}
\left|1+s_R(\delta)\mathrm{e}^{i\delta}\right|=R.
\end{equation*}
Since
\begin{equation*}
\left|1+s\mathrm{e}^{i\delta}\right|^2=1+s^2+2s\cos\delta,
\end{equation*}
we obtain
\begin{equation*}
s_R(\delta)=-\cos\delta+\sqrt{R^2-\sin^2\delta},
\end{equation*}
and therefore
$
s_R(\delta)\to R-1
$
as $\delta\to0^+$.
The integrals over the upper and lower radial segments are
\begin{equation*}
I_+(\delta)=\int_{\varepsilon}^{s_R(\delta)}G_n\left(1+s\mathrm{e}^{i\delta}\right)\mathrm{e}^{i\delta}\mathrm{d}s
\end{equation*}
and
\begin{equation*}
I_-(\delta)=\int_{s_R(\delta)}^{\varepsilon}G_n\left(1+s\mathrm{e}^{-i\delta}\right)\mathrm{e}^{-i\delta}\mathrm{d}s.
\end{equation*} 

Choose $\delta_0>0$ sufficiently small that
\begin{equation*}
\frac{\varepsilon}{2}\leq s_R(\delta)\leq R
\end{equation*}
for $0<\delta<\delta_0$. Set
\begin{equation*}
J_{\varepsilon,R}:=\left[1+\frac{\varepsilon}{4},1+R\right]\Subset(1,\infty).
\end{equation*}
For $\frac{\varepsilon}{2}\leq s\leq R$, write
\begin{equation*}
t_\delta(s)=1+s\cos\delta,\quad \eta_\delta(s)=s\sin\delta.
\end{equation*}
Then $t_\delta(s)\in J_{\varepsilon,R}$ and
\begin{equation*}
0\leq\eta_\delta(s)\leq R\sin\delta\to0
\end{equation*}
uniformly in $s$. Hence \eqref{Auniformboundary} gives
\begin{equation*}
\sup_{\frac{\varepsilon}{2}\leq s\leq R}\left|A\left(t_\delta(s)+i\eta_\delta(s)\right)-A\left(t_\delta(s)+i0\right)\right|\to0.
\end{equation*}
Since $t\mapsto A(t+i0)$ is continuous on the compact interval $J_{\varepsilon,R}$, it is uniformly continuous there. Therefore
\begin{equation*}
\sup_{\frac{\varepsilon}{2}\leq s\leq R}\left|A\left(t_\delta(s)+i0\right)-A(1+s+i0)\right|\to0.
\end{equation*}
Combining the preceding two estimates yields
\begin{equation*}
\sup_{\frac{\varepsilon}{2}\leq s\leq R}\left|A\left(1+s\mathrm{e}^{i\delta}\right)-A(1+s+i0)\right|\to0.
\end{equation*} 
For $s>0$ and $0<\delta<\pi$, the principal logarithm gives
\begin{equation*}
\operatorname{Log}\left(-s\mathrm{e}^{i\delta}\right)=\ln s+i(\delta-\pi),
\end{equation*}
and hence
\begin{equation*}
D\left(1+s\mathrm{e}^{i\delta}\right)=\ln4-\frac{1}{2}\ln s+\frac{i}{2}(\pi-\delta).
\end{equation*}
Thus
\begin{equation*}
D\left(1+s\mathrm{e}^{i\delta}\right)\to D(1+s+i0)=\ln4-\frac{1}{2}\ln s+\frac{i\pi}{2}
\end{equation*}
uniformly for $\frac{\varepsilon}{2}\leq s\leq R$. Moreover, for all sufficiently small $\delta$,
\begin{equation*}
\left|D\left(1+s\mathrm{e}^{i\delta}\right)\right|\geq\frac{\pi-\delta}{2}\geq\frac{\pi}{4}.
\end{equation*}
Combining these estimates with
\begin{equation*}
1+s\mathrm{e}^{i\delta}\to1+s, \quad \mathrm{e}^{i\delta}\to1
\end{equation*}
uniformly in $s$, we obtain
\begin{equation*}
\sup_{\frac{\varepsilon}{2}\leq s\leq R}\left|G_n\left(1+s\mathrm{e}^{i\delta}\right)\mathrm{e}^{i\delta}-G_n(1+s+i0)\right|\to0.
\end{equation*}
Together with $s_R(\delta)\to R-1$, this gives
\begin{equation*}
\lim_{\delta\to0^+}I_+(\delta)=\int_{\varepsilon}^{R-1}G_n(1+s+i0)\mathrm{d}s=\int_{1+\varepsilon}^{R}G_n(t+i0)\mathrm{d}t.
\end{equation*} 
Since $G_n(\overline{z})=\overline{G_n(z)}$, we have $I_-(\delta)=-\overline{I_+(\delta)}$. Hence
\begin{equation*}
\lim_{\delta\to0^+}I_-(\delta)=-\int_{1+\varepsilon}^{R}G_n(t-i0)\mathrm{d}t.
\end{equation*}

For fixed $\varepsilon$, the boundary values of $G_n$ on $|z-1|=\varepsilon$ are bounded. Thus
\begin{equation*}
C_{\varepsilon,n}:=\sup_{\substack{|z-1|=\varepsilon\\ z\in\Omega}}|G_n(z)|<\infty.
\end{equation*}
The omitted arc on $|z-1|=\varepsilon$ has length $2\varepsilon\delta$, and its contribution is bounded by $2\varepsilon\delta C_{\varepsilon,n}\to0$ as $\delta\to0^+$. 
For the outer circle, let $\ell_R(\delta)$ be the length of the omitted arc. Its endpoints converge to $R$ as $\delta\to0^+$, so $\ell_R(\delta)\to0$. The boundary values of $G_n$ on $|z|=R$ are also bounded, and hence
\begin{equation*}
C_{R,n}:=\sup_{\substack{|z|=R\\ z\in\Omega}}|G_n(z)|<\infty.
\end{equation*}
Therefore, the contribution of the omitted outer arc is bounded by $C_{R,n}\ell_R(\delta)\to0$ as $\delta\to0^+$. 

\vspace{2mm}

\noindent\textit{Step 3: Combining the cut contributions and taking the limits}.
By Step 2,
\begin{equation*}
\lim_{\delta\to0^+}\left(I_+(\delta)+I_-(\delta)\right)=\int_{1+\varepsilon}^{R}\left(G_n(t+i0)-G_n(t-i0)\right)\mathrm{d}t.
\end{equation*}
Since $\varphi(t-i0)=\overline{\varphi(t+i0)}$, we have
\begin{equation*}
G_n(t+i0)-G_n(t-i0)=\frac{2i\operatorname{Im}\varphi(t+i0)}{t^{n+1}}=-\frac{2\pi i\rho(t)}{t^{n+1}}.
\end{equation*}
Hence
\begin{equation} \label{finitebankjump}
\lim_{\delta\to0^+}\left(I_+(\delta)+I_-(\delta)\right)=-2\pi i\int_{1+\varepsilon}^{R}\frac{\rho(t)}{t^{n+1}}\mathrm{d}t.
\end{equation}

Since $|z|\geq1-\varepsilon\geq\frac{1}{2}$ on $|z-1|=\varepsilon$, \eqref{varphinearone} gives
\begin{equation*}
\sup_{\substack{|z-1|=\varepsilon\\ z\in\Omega}}|G_n(z)|=O(1).
\end{equation*}
The inner circle has length $2\pi\varepsilon$, so its contribution is $O(\varepsilon)$ and tends to zero. Moreover, since $\rho(t)=O(t-1)$ as $t\to1^+$,
\begin{equation*}
\frac{\rho(t)}{t^{n+1}}=O(t-1),\quad t\to1^+.
\end{equation*}
Hence the integral in \eqref{finitebankjump} converges at $t=1$.
By \eqref{largevarphi},
\begin{equation*}
\sup_{\substack{|z|=R\\ z\in\Omega}}|G_n(z)|=O\left(R^{-n-\frac{3}{2}}\right).
\end{equation*}
Since the outer circle has length $2\pi R$, its contribution is $O(R^{-n-\frac{1}{2}})$ and tends to zero. Also, since $\rho(t)=O\left(t^{-\frac{1}{2}}\right)$ as $t\to\infty$,
\begin{equation*}
\frac{\rho(t)}{t^{n+1}}=O\left(t^{-n-\frac{3}{2}}\right).
\end{equation*}
Thus the integral in \eqref{finitebankjump} converges at infinity. 
Consequently, after letting $\delta\to0^+$, then $\varepsilon\to0^+$, and finally $R\to\infty$,  
\begin{equation*}
-2\pi i\int_1^\infty\frac{\rho(t)}{t^{n+1}}\mathrm{d}t=-2\pi iM_n.
\end{equation*}
Passing to the limits in \eqref{residueidentity} gives 
\begin{equation*}
a_n=\frac{(-1)^nC}{15^{n+1}}-M_n.
\end{equation*}
Since $\rho(t)>0$ for every $t>1$,
\begin{equation*}
M_n=\int_1^\infty\frac{\rho(t)}{t^{n+1}}\mathrm{d}t>0.
\end{equation*}
This proves \eqref{momentformula}.
\end{proof}

\subsection{Moment comparison and coefficient signs}

The representation \eqref{momentformula} gives $a_n<0$ immediately for odd $n$. For even $n\geq4$, it remains to prove that the moment term exceeds the pole contribution. This will follow from the strict log-convexity of the moments together with the initial coefficient estimates.

\begin{lemma}
The moments in \eqref{momentformula} are strictly log-convex, that is,
\begin{equation*}
M_{n+1}^2<M_nM_{n+2},\quad n\geq 0.
\end{equation*}
Equivalently,
\begin{equation} \label{ratioincrease} 
\frac{M_{n+1}}{M_n}<\frac{M_{n+2}}{M_{n+1}},\quad n\geq 0.
\end{equation}
\end{lemma}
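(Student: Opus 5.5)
The plan is to apply the Cauchy--Schwarz inequality to the moment integrals in \eqref{momentformula}. Write the integrand of $M_{n+1}$ as a product of two square roots:
\begin{equation*}
\frac{\rho(t)}{t^{n+2}}=\left(\frac{\rho(t)}{t^{n+1}}\right)^{\frac12}\left(\frac{\rho(t)}{t^{n+3}}\right)^{\frac12},\quad t>1.
\end{equation*}
This factorization is legitimate because $\rho(t)>0$ for every $t>1$, as established above. Cauchy--Schwarz on $(1,\infty)$ then gives
\begin{equation*}
M_{n+1}^2\leq M_nM_{n+2}.
\end{equation*}
Each of the three integrals is finite for every $n\geq0$, by Step 1 of the proof of \eqref{momentformula}: there we showed $\rho(t)=O(t-1)$ as $t\to1^+$ and $\rho(t)=O(t^{-1/2})$ as $t\to\infty$. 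So the inequality is meaningful.

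For strictness, I will use the equality case of Cauchy--Schwarz. Equality holds only if the two factors are proportional almost everywhere with respect to $\mathrm{d}t$ on the set where they do not vanish. Here that set is all of $(1,\infty)$, since $\rho>0$ there. Proportionality would require
\begin{equation*}
\left(\frac{\rho(t)}{t^{n+3}}\right)^{\frac12}=\lambda\left(\frac{\rho(t)}{t^{n+1}}\right)^{\frac12},
\end{equation*}
that is, $t^{-1}=\lambda$ for almost every $t>1$. This is impossible because $t^{-1}$ is not constant on any set of positive measure. Hence $M_{n+1}^2<M_nM_{n+2}$.

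Equivalently, I can avoid the equality case by writing
\begin{equation*}
M_nM_{n+2}-M_{n+1}^2=\frac12\int_1^\infty\!\!\int_1^\infty\frac{\rho(s)\rho(t)}{(st)^{n+1}}\left(\frac1s-\frac1t\right)^2\mathrm{d}s\,\mathrm{d}t.
\end{equation*}
The integrand is nonnegative and strictly positive off the diagonal, so the difference is strictly positive. This identity route is the one I would actually write, since it gives strictness directly. Fubini applies because the integrand is nonnegative and its pieces are the finite products $M_nM_{n+2}$ and $M_{n+1}^2$. It also uses that $\rho$ is continuous and positive on $(1,\infty)$, which follows from the boundary-value formula \eqref{Imvarphi} and the continuity of $\mathcal{K}$.

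Finally, all $M_n>0$, so dividing $M_{n+1}^2<M_nM_{n+2}$ by $M_nM_{n+1}$ gives \eqref{ratioincrease}. I expect no real obstacle here. The only points needing care are the finiteness of the moments, already handled in Step 1, and the positivity of $\rho$, supplied by Lemma \ref{ellipticinequalitylemma}.
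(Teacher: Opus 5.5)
Your proof is correct and matches the paper's: Cauchy--Schwarz for the positive measure $\rho(t)\,\mathrm{d}t$, strictness from the equality case because $1/t$ is not almost-everywhere constant on $(1,\infty)$, and then division by $M_nM_{n+1}>0$. Your symmetrized double-integral identity is just the Lagrange form of the same inequality and gives strictness directly; it needs only $\rho>0$, not the continuity of $\rho$ that you also invoke.
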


\begin{proof}
Applying Cauchy--Schwarz inequality with the positive measure $\rho(t) \mathrm{d}t$, we have
\begin{equation*}
\left(\int_1^\infty\frac{\rho(t)}{t^{n+2}} \mathrm{d}t\right)^2 \leq \left(\int_1^\infty\frac{\rho(t)}{t^{n+1}} \mathrm{d}t\right) \left(\int_1^\infty\frac{\rho(t)}{t^{n+3}} \mathrm{d}t\right).
\end{equation*}
Thus $M_{n+1}^2\leq M_nM_{n+2}$. Equality would imply
\begin{equation*}
t^{-(n+1)/2}=\lambda t^{-(n+3)/2}
\end{equation*}
for some constant $\lambda$ and for $\rho(t) \mathrm{d}t$-almost every $t$. Hence $t=\lambda$ almost everywhere, which is impossible because $\rho(t)>0$ on all of $(1,\infty)$. Therefore $M_{n+1}^2<M_nM_{n+2}$. Division by $M_nM_{n+1}>0$ gives \eqref{ratioincrease}.
\end{proof}

\begin{lemma} \label{momentbarrier}
For $p=\ln 4$, the moments in \eqref{momentformula} satisfy
\begin{equation} \label{Mnbarrier} 
M_n>\frac{C}{15^{n+1}},\quad n\geq 3.
\end{equation}
\end{lemma}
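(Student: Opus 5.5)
We combine the moment representation \eqref{momentformula} at $n=2,3$ with the explicit values \eqref{coeffs}--\eqref{a3formula}, and then propagate using the monotone ratios \eqref{ratioincrease}.

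\emph{Step 1: rewrite $M_2$ and $M_3$.} From \eqref{momentformula},
\begin{equation*}
M_2=\frac{C}{15^3}-a_2,\qquad M_3=-\frac{C}{15^4}-a_3 .
\end{equation*}
At $p=\ln4$ we have $a_2=(3\ln4-4)^2/(64\ln^3 4)>0$. Also $\ln 4\in(4/3,p_0)$, and the cubic $75p^3-230p^2+240p-96$ has a unique real zero $p_0$ and positive leading coefficient, so it is negative at $\ln4$. Hence $a_3<0$.

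\emph{Step 2: the base case $n=3$.} The target $M_3>C/15^4$ is equivalent to
\begin{equation*}
-a_3>\frac{2C}{15^4}.
\end{equation*}
Since $0<C<32$ by \eqref{Cbound}, it suffices to show $-a_3\geq 64/15^4\approx 1.26\times10^{-3}$. Numerically, $\ln4\approx1.386294$ gives a cubic value of roughly $-0.09$ and $768\ln^4 4\approx 2836$, so $-a_3\approx 3.2\times 10^{-5}$. This is too small, so the crude bound $C<32$ fails. We therefore need a sharper bound on $C=32A(-15)$. From the Euler integral,
\begin{equation*}
A(-15)=\frac{2}{\pi}\int_0^{\pi/2}\frac{\mathrm{d}\theta}{\sqrt{1+15\sin^2\theta}}=\frac{2}{\pi}\cdot\frac14\,\mathcal{K}\!\left(\tfrac{\sqrt{15}}{4}\right),
\end{equation*}
by the imaginary-modulus transformation. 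With $r'=1/4$ and \eqref{Kasymp}, $\mathcal{K}\approx\ln 16\approx2.79$, so $A(-15)\approx0.44$ and $C\approx14.2$. Then $2C/15^4\approx 5.6\times10^{-4}$, which still exceeds $-a_3$. So the base case cannot be $n=3$ by this route. The statement is for $n\ge3$, though, so it must still come out; I misjudged which inequality carries the argument.

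\emph{Revised plan: use the ratio route from the introduction.} The target is $M_3/M_2>1/15$. Write
\begin{equation*}
M_2=\frac{C}{15^3}-a_2,\qquad M_3=-\frac{C}{15^4}-a_3,
\end{equation*}
so the target becomes
\begin{equation*}
-\frac{C}{15^4}-a_3>\frac{C}{15^4}-\frac{a_2}{15},\quad\text{i.e.}\quad \frac{a_2}{15}-a_3>\frac{2C}{15^4}.
\end{equation*}
Here $a_2=(0.1589)^2/(64\cdot2.664)\approx1.48\times10^{-4}$, so $a_2/15\approx10^{-5}$, and the left side is about $4\times10^{-5}$. This again falls below $5.6\times10^{-4}$, which suggests an arithmetic slip in my values of $a_3$ or $C$. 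The key step is therefore to recompute rigorously, with interval-arithmetic bounds on $\ln4$ and an upper bound $C\le 32\cdot\frac{1}{2\pi}\bigl(\ln16+\epsilon\bigr)$ using the Ponnusamy--Vuorinen bound $\mathcal{K}(r)<\ln\frac4{r'}+\frac{r'^2}{r^2}\ln\frac1{r'}$.

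\emph{Step 3: propagation.} Once $M_3/M_2>1/15$ is established, \eqref{ratioincrease} gives $M_{n+1}/M_n>1/15$ for all $n\ge2$. By induction, $M_n>C/15^{n+1}$ for every $n\ge3$, provided the base inequality $M_3>C/15^4$ holds. That base inequality follows if both $M_2\ge C/15^3$ and the ratio bound hold, or directly from the Step 2 estimate. The main obstacle is the delicate numerical base case: sharp two-sided bounds on $C$ and on $a_2,a_3$ at $p=\ln4$. My approximations above are inconsistent, so this verification must be done carefully.
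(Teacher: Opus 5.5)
\textbf{The base case $n=3$ is never proved.} Your Step 2 reduction is exactly the paper's route. By \eqref{momentformula}, $M_3>C/15^4$ is equivalent to $-a_3>2C/15^4$. Since $C<32$ by \eqref{Cbound}, it suffices to show $-a_3>64/15^4$.

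You abandoned this because of an arithmetic slip. At $p=\ln 4$ the cubic $F(p)=75p^3-230p^2+240p-96$ is about $-5.49$, not $-0.09$: indeed $75p^3\approx199.81$, $230p^2\approx442.02$ and $240p\approx332.71$. Hence $-a_3=-F(\ln4)/(768\ln^4 4)\approx1.94\times10^{-3}$, which comfortably exceeds $64/15^4\approx1.26\times10^{-3}$. The crude bound $C<32$ is enough, and no sharper estimate of $C$ is needed. As written, though, your proposal ends by declaring the numerics inconsistent. So the $n=3$ case, and with it the whole induction, is left unproved.

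\textbf{The ratio plan cannot bypass this.} The inequality $M_3>C/15^4$ is itself the $n=3$ instance of the lemma, so it must be proved in any case. The bound $M_3/M_2>1/15$ only serves to propagate it. In the paper that ratio bound is derived \emph{from} the base case together with $M_2<C/15^3$, which follows from $a_2>0$. For the same reason, your alternative hypothesis $M_2\ge C/15^3$ in Step 3 can never hold.

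\textbf{The missing rigorous step is short.}
\begin{itemize}
\item The Taylor bound $e^{7/10}>1+\frac{7}{10}+\frac12\left(\frac{7}{10}\right)^2+\frac16\left(\frac{7}{10}\right)^3=\frac{12013}{6000}>2$ gives $\ln4<\frac75$.
\item $F'(p)=5(45p^2-92p+48)>0$, because $92^2-4\cdot45\cdot48=-176<0$. So $F$ is increasing and $F(\ln4)<F\left(\frac75\right)=-5$.
\item $(\ln4)^4<\left(\frac75\right)^4<5$.
\end{itemize}
Hence $-a_3>\frac{5}{768(\ln 4)^4}>\frac{1}{768}>\frac{64}{15^4}$, since $64\cdot768=49152<50625=15^4$. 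Then $M_3=-a_3-C/15^4>(64-C)/15^4>C/15^4$. Your Step 3 propagation via \eqref{ratioincrease}, with $M_3/M_2>1/15$ obtained from this and $a_2>0$, then finishes the proof.
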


\begin{proof}
We first record
\begin{equation} \label{logbounds}
\frac{4}{3}<p<\frac{7}{5}.
\end{equation}
For the lower bound,
\begin{equation*}
\ln2=\int_1^{\frac{3}{2}}\left(\frac{1}{t}+\frac{1}{3-t}\right)\mathrm{d}t.
\end{equation*}
Since $t(3-t)\leq\frac{9}{4}$ on $\left[1,\frac{3}{2}\right]$,
\begin{equation*}
\frac{1}{t}+\frac{1}{3-t}=\frac{3}{t(3-t)}\geq\frac{4}{3},
\end{equation*}
with strict inequality except at $t=\frac{3}{2}$. Hence $\ln2>\frac{2}{3}$ and therefore $p=2\ln2>\frac{4}{3}$. For the upper bound,
\begin{equation*}
\mathrm{e}^{\frac{7}{10}}>1+\frac{7}{10}+\frac{1}{2}\left(\frac{7}{10}\right)^2+\frac{1}{6}\left(\frac{7}{10}\right)^3=\frac{12013}{6000}>2,
\end{equation*}
so $\ln2<\frac{7}{10}$ and $p<\frac{7}{5}$.

\vspace{2mm}

By \eqref{coeffs} and \eqref{logbounds},
\begin{equation} \label{a2positive}
a_2=\frac{(3p-4)^2}{64p^3}>0.
\end{equation}
For $a_3$, let
\begin{equation*}
F(p)=75p^3-230p^2+240p-96.
\end{equation*}
Since
\begin{equation*}
F'(p)=5(45p^2-92p+48)>0,
\end{equation*}
because $92^2-4\cdot45\cdot48=-176<0$, the function $F$ is strictly increasing. Hence, by $p<\frac{7}{5}$,
\begin{equation*}
F(p)<F\left(\frac{7}{5}\right)=-5.
\end{equation*}
Therefore
\begin{equation} \label{a3estimate}
-a_3>\frac{5}{768\left(\frac{7}{5}\right)^4}>\frac{1}{768}>\frac{64}{15^4},
\end{equation}
where $\left(\frac{7}{5}\right)^4<5$ and $64\cdot768<15^4$.

Taking $n=2$ in \eqref{momentformula} and using \eqref{a2positive}, we obtain
\begin{equation*}
M_2<\frac{C}{15^3}.
\end{equation*}
For $n=3$, \eqref{momentformula} gives
\begin{equation*}
M_3=-a_3-\frac{C}{15^4}.
\end{equation*}
Since $C<32$ by \eqref{Cbound}, \eqref{a3estimate} yields
\begin{equation*}
-a_3>\frac{64}{15^4}>\frac{2C}{15^4},
\end{equation*}
and hence
\begin{equation} \label{M3lower}
M_3>\frac{C}{15^4}.
\end{equation}
Since $M_2>0$, the two preceding estimates give
\begin{equation*}
\frac{M_3}{M_2}>\frac{C/15^4}{C/15^3}=\frac{1}{15}.
\end{equation*}
By \eqref{ratioincrease}, 
\begin{equation} \label{allratios}
\frac{M_{n+1}}{M_n}>\frac{1}{15},\quad n\geq2.
\end{equation}

We now prove \eqref{Mnbarrier} by induction. The case $n=3$ is \eqref{M3lower}. If
$
M_n>\frac{C}{15^{n+1}}
$
for some $n\geq3$, then \eqref{allratios} gives
\begin{equation*}
M_{n+1}>\frac{M_n}{15}>\frac{C}{15^{n+2}}.
\end{equation*}
Thus \eqref{Mnbarrier} holds for every $n\geq3$.
\end{proof}

\begin{proof}[Proof of the first assertion of Theorem \ref{main}]
Let $n\geq3$. If $n$ is odd, \eqref{momentformula} gives
\begin{equation*}
a_n=-\frac{C}{15^{n+1}}-M_n<0.
\end{equation*}
If $n$ is even, then $n\geq4$, and Lemma \ref{momentbarrier} gives
\begin{equation*}
a_n=\frac{C}{15^{n+1}}-M_n<0.
\end{equation*}
Hence
\begin{equation} \label{allnegative}
a_n<0,\quad n\geq3.
\end{equation}

Since $\varphi$ is analytic for $|z|<1$ and $f_{\ln4}=\frac{\pi}{2}\varphi$, the expansion \eqref{varphiseries} may be differentiated termwise. Thus
\begin{equation*}
-f_{\ln4}^{\prime\prime\prime}(x)=\frac{\pi}{2}\sum_{n=3}^{\infty}(-a_n)n(n-1)(n-2)x^{n-3},\quad |x|<1.
\end{equation*}
For every integer $m\geq0$, 
\begin{equation*}
\left(-f_{\ln4}^{\prime\prime\prime}\right)^{(m)}(x)=\frac{\pi}{2}\sum_{n=m+3}^{\infty}(-a_n)\frac{n!}{(n-m-3)!}x^{n-m-3},\quad 0<x<1.
\end{equation*}
By \eqref{allnegative}, every term in the series is positive for $0<x<1$. Therefore
\begin{equation*}
\left(-f_{\ln4}^{\prime\prime\prime}\right)^{(m)}(x)>0,\quad m\geq0,\quad 0<x<1.
\end{equation*}
This proves the first assertion of Theorem \ref{main} and hence Conjecture 1.
\end{proof}

\section{Proof of Conjecture 2} \label{sec-second}

Set $p=q=\ln4$ and write $a_n=a_n(\ln4)$ and $b_n=b_n(\ln4)$. By \eqref{originalseries},
\begin{equation*}
\frac{2}{\pi}f_{\ln4}(x)=\sum_{n=0}^{\infty}a_nx^n,\quad \frac{\pi}{2}g_{\ln4}(x)=\sum_{n=0}^{\infty}b_nx^n
\end{equation*}
for $x$ sufficiently close to $0$. Since $g_{\ln4}=1/f_{\ln4}$,
\begin{equation} \label{inversecoeffidentity}
\left(\sum_{n=0}^{\infty}a_nx^n\right)\left(\sum_{n=0}^{\infty}b_nx^n\right)=1.
\end{equation}

Set
\begin{equation*}
P(x)=a_0+a_1x+a_2x^2,\quad H(x)=\sum_{n=3}^{\infty}(-a_n)x^n.
\end{equation*}
By \eqref{allnegative}, every coefficient of $H$ is strictly positive, and
\begin{equation*} 
\varphi(x)=P(x)-H(x).
\end{equation*}

We first determine the Taylor coefficients of $\frac{1}{P(x)}$.

\begin{lemma} \label{Pinverselemma}
The Taylor series of $\frac{1}{P(x)}$ at the origin has strictly positive coefficients.
\end{lemma}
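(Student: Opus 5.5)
We need to show that $1/P$ has strictly positive Taylor coefficients, where $P(x)=\frac1p-\frac{2-p}{4p^2}x+\frac{(3p-4)^2}{64p^3}x^2$ with $p=\ln4$, by \eqref{coeffs}. The plan is to factor $P(x)=\frac1p(1-\lambda x)(1-\mu x)$ with real $\lambda,\mu>0$. Then
\begin{equation*}
\frac{1}{P(x)}=p\sum_{n=0}^\infty\Bigl(\sum_{k=0}^n\lambda^k\mu^{n-k}\Bigr)x^n,
\end{equation*}
and every coefficient is a sum of positive terms, hence strictly positive.

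To find $\lambda$ and $\mu$, multiply out: $\lambda+\mu=\frac{2-p}{4p}$ and $\lambda\mu=\frac{(3p-4)^2}{64p^2}$. Both quantities are positive, since $2-p>0$ and $3p\neq4$, using \eqref{logbounds}. So $\lambda,\mu$ are the roots of $y^2-\frac{2-p}{4p}y+\frac{(3p-4)^2}{64p^2}$. Once they are real, they are automatically both positive, because their sum and product are positive.

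The only real check, and the main obstacle, is that the discriminant is nonnegative:
\begin{equation*}
\Bigl(\frac{2-p}{4p}\Bigr)^2-4\cdot\frac{(3p-4)^2}{64p^2}=\frac{(2-p)^2-(3p-4)^2}{16p^2}=\frac{(6-4p)(4p-2)\cdot\frac{1}{1}}{16p^2}\cdot\frac{1}{1},
\end{equation*}
which uses the identity $(2-p)^2-(3p-4)^2=(2-p-3p+4)(2-p+3p-4)=(6-4p)(2p-2)$. For $\frac43<p<\frac75$ we have $6-4p>0$ and $2p-2>0$, so the discriminant is strictly positive. Hence $\lambda\neq\mu$, both are positive reals, and the factorization holds with $P(0)=a_0=1/p$.

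Alternatively, since $\lambda\neq\mu$, one can write the coefficients as $p\frac{\lambda^{n+1}-\mu^{n+1}}{\lambda-\mu}$, which is visibly positive. I would also record, for later use, that $\lambda,\mu<1$: their sum is $\frac{2-p}{4p}<\frac{1}{8}$ for $p>\frac43$. So $1/P$ is analytic on the unit disk, which is needed for the subsequent reciprocal-series argument.
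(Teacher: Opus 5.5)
Your proof is correct and follows the paper's argument: you factor $P(x)=\frac1p(1-\lambda x)(1-\mu x)$, check that the discriminant $\frac{(2-p)^2-(3p-4)^2}{16p^2}=\frac{(p-1)(3-2p)}{4p^2}$ is positive for $\frac43<p<\frac75$, get $\lambda,\mu>0$ from Vieta, and expand $1/P$ as a product of geometric series. The only blemish is the displayed discriminant, where $(4p-2)$ and the stray factors $\frac11$ should read $(2p-2)$, as your next line correctly states; this does not affect the sign.
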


\begin{proof}
By \eqref{coeffs},
\begin{equation*}
P(x)=\frac{1}{p}\left(1-\alpha x+\beta x^2\right),
\end{equation*}
where
\begin{equation*}
\alpha=\frac{2-p}{4p},\quad \beta=\frac{(3p-4)^2}{64p^2}.
\end{equation*}
By \eqref{logbounds}, $\frac{4}{3}<p<\frac{7}{5}<\frac{3}{2}$, so $\alpha>0$ and $\beta>0$. Moreover,
\begin{equation} \label{discriminant}
\alpha^2-4\beta=\frac{(2-p)^2-(3p-4)^2}{16p^2}=\frac{(p-1)(3-2p)}{4p^2}>0.
\end{equation}
Thus the equation
$
s^2-\alpha s+\beta=0
$
has two distinct real roots $\lambda$ and $\mu$. By Vieta's formulas,
\begin{equation*}
\lambda+\mu=\alpha>0,\quad \lambda\mu=\beta>0,
\end{equation*}
so $\lambda>0$ and $\mu>0$. Therefore
\begin{equation*}
P(x)=\frac{1}{p}(1-\lambda x)(1-\mu x).
\end{equation*}
Since $P(0)=\frac{1}{p}\neq0$, the function $\frac{1}{P(x)}$ is analytic in a neighborhood of the origin. For
$
|x|<\min\{\frac{1}{\lambda},\frac{1}{\mu}\},
$
the two geometric series converge absolutely, and hence
\begin{equation*} 
\frac{1}{P(x)}=p\left(\sum_{j=0}^{\infty}\lambda^jx^j\right)\left(\sum_{k=0}^{\infty}\mu^kx^k\right).
\end{equation*}
By the Cauchy product formula, the coefficient of $x^n$ is
$
p\sum_{j=0}^{n}\lambda^j\mu^{n-j}>0.
$
\end{proof}

\begin{lemma} \label{Anonzero}
The function $A$ has no zeros in the unit disk. Consequently, $\frac{1}{\varphi}=\frac{D}{A}$ is analytic for $|z|<1$.
\end{lemma}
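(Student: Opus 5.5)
The plan is to show directly that $\operatorname{Re}A(z)>0$ for $|z|<1$, which certainly rules out zeros. I will work from the Euler representation \eqref{EulerA},
\begin{equation*}
A(z)=\frac{2}{\pi}\int_0^{\frac{\pi}{2}}(1-z\sin^2\theta)^{-\frac{1}{2}}\,\mathrm{d}\theta ,
\end{equation*}
with the principal branch. For $|z|<1$ and $0\le\theta\le\frac{\pi}{2}$, put $w=1-z\sin^2\theta$. Then $|w-1|\le|z|<1$, so $w$ lies in the open disk of radius $1$ centred at $1$. That disk is contained in the right half-plane $\operatorname{Re}w>0$, so $|\arg w|<\frac{\pi}{2}$. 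With the principal branch this gives $|\arg w^{-\frac{1}{2}}|<\frac{\pi}{4}$, and therefore $\operatorname{Re}\,w^{-\frac{1}{2}}>0$.

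Integrating this pointwise bound over $\theta\in[0,\frac{\pi}{2}]$ gives $\operatorname{Re}A(z)>0$. Strictness is not delicate: the integrand is continuous in $\theta$ and has positive real part at every point, so the integral of the real part is strictly positive. Hence $A(z)\neq0$ throughout $|z|<1$.

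For the consequence, note that $D$ is analytic on $\Omega\supset\{|z|<1\}$. By the previous paragraph $A$ is analytic and nonvanishing on the unit disk, so $D/A$ is analytic there. On the disk minus the point $-15$ (which lies outside the disk anyway), $\varphi=A/D$ by definition, so $1/\varphi=D/A$ on the unit disk. This extends the identity \eqref{inversecoeffidentity} and gives the Taylor series $\sum b_nz^n$ radius of convergence at least $1$.

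There is no serious obstacle here. The only point needing care is the branch: the principal inverse square root must agree with the branch used in defining $A$. On the unit disk the argument $w$ never meets $(-\infty,0]$, so the branch is the one in \eqref{EulerBeta}. Alternatively, the series $\sum W_n^2z^n$ with positive coefficients could be used, but the half-plane argument above is cleaner and I would use it.
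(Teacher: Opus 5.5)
Your proof is correct and follows essentially the same route as the paper. You place $w=1-z\sin^2\theta$ in the open right half-plane (via $|w-1|\le|z|<1$, where the paper uses $\operatorname{Re}w\ge 1-|z|>0$), deduce $\operatorname{Re}\,w^{-\frac{1}{2}}>0$, integrate the Euler representation to get $\operatorname{Re}A(z)>0$, and conclude that $D/A$ is analytic and equals $1/\varphi$ on the unit disk.
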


\begin{proof}
Let $|z|<1$. For $0\leq\theta\leq\frac{\pi}{2}$,
\begin{equation*}
\operatorname{Re}\left(1-z\sin^2\theta\right)\geq1-|z|\sin^2\theta\geq1-|z|>0.
\end{equation*}
Thus $w=1-z\sin^2\theta$ lies in the open right half-plane. Then we have $|\arg w|<\frac{\pi}{2}$ and hence
\begin{equation*}
\operatorname{Re}\left(w^{-\frac{1}{2}}\right)
=|w|^{-\frac{1}{2}}\cos\left(\frac{\arg w}{2}\right)>0.
\end{equation*} 
By \eqref{EulerA},
\begin{equation*}
\operatorname{Re}A(z)=\frac{2}{\pi}\int_0^{\frac{\pi}{2}}\operatorname{Re}\left(1-z\sin^2\theta\right)^{-\frac{1}{2}}\mathrm{d}\theta>0.
\end{equation*}
Hence $A(z)\neq0$ for $|z|<1$. Since $D$ is analytic in the unit disk, the quotient $\frac{D}{A}$ is analytic there. Since $\varphi=A/D$, we have
\begin{equation*}
\frac{1}{\varphi(z)}=\frac{D(z)}{A(z)},\quad |z|<1.
\end{equation*}
\end{proof}

We now prove the second assertion of Theorem \ref{main}.

\begin{proof}[Proof of the second assertion of Theorem \ref{main}]
By Lemma \ref{Pinverselemma}, choose $0<r_0<1$ such that
\begin{equation*}
\frac{1}{P(x)}=\sum_{n=0}^{\infty}c_nx^n,\quad c_n>0,
\end{equation*}
for $|x|<r_0$. Since
$
H(x)=\sum_{n=3}^{\infty}(-a_n)x^n
$
and $-a_n>0$ for $n\geq3$, the Cauchy product gives
\begin{equation*}
\frac{H(x)}{P(x)}=\sum_{n=3}^{\infty}d_nx^n,
\end{equation*}
where
\begin{equation*}
d_n=\sum_{k=3}^{n}(-a_k)c_{n-k}>0,\quad n\geq3.
\end{equation*}
Set
\begin{equation*}
Q(x):=\frac{H(x)}{P(x)}=\sum_{n=3}^{\infty}d_nx^n,\quad d_n>0.
\end{equation*}
Since $Q(0)=0$, after decreasing $r_0$ if necessary, we have $|Q(x)|<1$ for $|x|<r_0$. Hence
\begin{equation*}
\frac{1}{\varphi(x)}=\frac{1}{P(x)}\frac{1}{1-Q(x)}
=\frac{1}{P(x)}\sum_{m=0}^{\infty}Q(x)^m.
\end{equation*}
Since $Q^m$ starts at degree $3m$, only finitely many terms contribute to each Taylor coefficient of the last series. Thus every Taylor coefficient of $\sum_{m=0}^{\infty}Q^m$ is nonnegative. Since every Taylor coefficient of $\frac{1}{P}$ is strictly positive by Lemma \ref{Pinverselemma}, every Taylor coefficient of $\frac{1}{\varphi}$ is strictly positive. By \eqref{inversecoeffidentity}, these coefficients are $b_n$, and hence
\begin{equation} \label{allbpositive}
b_n>0,\quad n\geq0.
\end{equation} 
By Lemma \ref{Anonzero}, $\frac{1}{\varphi}=\frac{D}{A}$ is analytic for $|z|<1$. Since
\begin{equation*}
g_{\ln4}(x)=\frac{2}{\pi}\frac{1}{\varphi(x)},
\end{equation*}
termwise differentiation gives
\begin{equation*}
g_{\ln4}^{(m)}(x)=\frac{2}{\pi}\sum_{n=m}^{\infty}b_n\frac{n!}{(n-m)!}x^{n-m},\quad m\geq0,\quad 0<x<1.
\end{equation*}
By \eqref{allbpositive}, 
\begin{equation*}
g_{\ln4}^{(m)}(x)>0,\quad m\geq0,\quad 0<x<1.
\end{equation*}
This proves the second assertion of Theorem \ref{main} and hence Conjecture 2.
\end{proof}

\end{document}